\documentclass[12pt]{article}

\usepackage{amsmath}
\usepackage{amsfonts}
\usepackage{amssymb}
\usepackage{amsthm}
\usepackage{hyperref}
\usepackage{enumerate}
\usepackage{cleveref}
\usepackage{mathrsfs}
\usepackage{geometry}

\newtheorem{theorem}{Theorem}[section]
\newtheorem{lemma}[theorem]{Lemma}
\newtheorem{proposition}[theorem]{Proposition}

\newtheorem{definition}[theorem]{Definition}
\newtheorem{remark}[theorem]{Remark}

\Crefname{conjecture}{Conjecture}{Conjectures}

\theoremstyle{plain}

\theoremstyle{plain}


\newcommand{\N}{\mathbb{N}}

\newcommand{\Z}{\mathbb{Z}}

\newcommand{\R}{\mathbb{R}}
\newcommand{\C}{\mathbb{C}}

\newcommand{\bbH}{\mathbb{H}}

\renewcommand{\Re}{\operatorname{Re}}

\newcommand{\sech}{\operatorname{sech}}

\newcommand{\calG}{\mathcal{G}}

\numberwithin{equation}{section}

\author{Jehanne Dousse and Michael H. Mertens}
\title{Asymptotic Formulae for Partition Ranks}

\begin{document}

\bibliographystyle{amsplain}
\maketitle
\begin{abstract}
Using an extension of Wright's version of the circle method, we obtain asymptotic formulae for partition ranks similar to formulae for partition cranks which where conjectured by F. Dyson and recently proved by the first author and K. Bringmann.
\end{abstract}
\section{Introduction and statement of results}
A partition of $n$ is a non-increasing sequence of natural numbers whose sum is $n$.
For example, there are $5$ partitions of $4$: $4$, $3+1$, $2+2$, $2+1+1$ and $1+1+1+1$. Let $p(n)$ denote the number of partitions of $n$.
One of the most beautiful theorems in partition theory is Ramanujan's congruences for $p(n)$. He proved~\cite{Rama} that for all $n \geq 0$,
\begin{align*}
p(5n+4) \equiv& 0 \pmod{5},\\
p(7n+5) \equiv& 0 \pmod{7},\\
p(11n+6) \equiv& 0 \pmod{11}.
\end{align*}

Dyson~\cite{Dyson44} introduced the rank, defined as the largest part of a partition minus the number of its parts, in order to explain the congruences modulo $5$ and $7$ combinatorially. He conjectured that for all $n$, the partitions of $5n+4$ (resp. $7n+5$) can be divided in $5$ (resp. $7$) different classes of same size according to their rank modulo $5$ (resp. $7$). This was later proved by Atkin and Swinnerton-Dyer~\cite{ASD54}.

However the rank fails to explain the congruences modulo $11$. Therefore Dyson conjectured the existence of another statistic which he called the ``crank'' which would give a combinatorial explanation for all the Ramanujan congruences. The crank was later found by Andrews and Garvan \cite{AG, Ga}. If for a partition $\lambda$, $o(\lambda)$ denotes the number of ones in $\lambda$, and $\mu(\lambda)$ is the number of parts strictly larger than $o(\lambda)$, then the \emph{crank} of $\lambda$ is defined as
$$
\text{crank} (\lambda) := \left\{ \begin{array}{cc}  \text{largest part of $\lambda$}& \text{if $o(\lambda) =0$,} \\ \mu (\lambda) - o(\lambda) & \text{if $o(\lambda)>0$.} \end{array}\right.
$$
Denote by $M(m,n)$ the number of partitions of $n$ with crank $m$, and by $N(m,n)$ the number of partitions of $n$ with rank $m$.

The first author and Bringmann~\cite{BD13} recently proved a longstanding conjecture of Dyson by using the modularity of the crank generating function and an extension to two variables of Wright's version of the circle method~\cite{Wright}.

\begin{theorem}[Bringmann-Dousse]
If $|m|\leq \frac1{\pi\sqrt{6}}\sqrt{n}\log n$, we have as $n\to\infty$
\begin{equation}\label{crankas}
M(m, n)=\frac{\beta}{4}\mathrm{sech}^2\left(\frac{\beta m}{2}\right) p(n)\left(1+O\left(\beta^{\frac12}|m|^{\frac13}\right)\right),
\end{equation}
where $\beta:=\tfrac{\pi}{\sqrt{6n}}$.
\end{theorem}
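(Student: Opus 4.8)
The plan is to apply the two-variable form of Wright's circle method to the crank generating function
\[
C(\zeta,q)=\sum_{n\ge0}\sum_{m\in\Z}M(m,n)\,\zeta^m q^n=\prod_{n\ge1}\frac{1-q^n}{(1-\zeta q^n)(1-\zeta^{-1}q^n)}.
\]
By Cauchy's theorem, parametrizing the contours by $q=e^{-\beta+i\psi}$ and $\zeta=e^{i\phi}$ with $\beta=\frac{\pi}{\sqrt{6n}}$, one obtains the exact representation
\[
M(m,n)=\frac{e^{\beta n}}{(2\pi)^2}\int_{-\pi}^{\pi}\int_{-\pi}^{\pi}C\!\left(e^{i\phi},e^{-\beta+i\psi}\right)e^{-im\phi}e^{-in\psi}\,d\psi\,d\phi.
\]
The radius $e^{-\beta}$ is chosen to be the saddle point of the pure partition generating function $C(1,q)=\prod_{n\ge1}(1-q^n)^{-1}$, so that the $\psi$-integral will later reproduce the factor $p(n)$.

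The analytic heart of the argument is a transformation formula for $C(e^{i\phi},e^{-z})$ as $z\to0$ in the right half-plane. Writing $q=e^{-z}$ and using the modularity of the Dedekind $\eta$-function for the numerator (which gives $\prod_{n\ge1}(1-q^n)^{-1}\sim\sqrt{\tfrac{z}{2\pi}}\,e^{\pi^2/(6z)}$) together with the modular transformation of the Jacobi theta function for the denominator $\prod_{n\ge1}(1-\zeta q^n)(1-\zeta^{-1}q^n)$, I would derive a main term of the form
\[
C\!\left(e^{i\phi},e^{-z}\right)\sim\sqrt{\frac{z}{2\pi}}\;\frac{\pi\phi/z}{\sinh(\pi\phi/z)}\;e^{\pi^2/(6z)},
\]
with an explicit, controllable error; note this is consistent with the partition asymptotic in the limit $\phi\to0$. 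The elementary prefactor is engineered so that Fourier inversion in $\phi$ returns the desired density: indeed
\[
\int_{-\infty}^{\infty}\frac{\beta}{4}\,\sech^2\!\left(\frac{\beta m}{2}\right)e^{im\phi}\,dm=\frac{\pi\phi/\beta}{\sinh(\pi\phi/\beta)},
\]
so that at the saddle $z\approx\beta$ the $\phi$-factor of the transformation is exactly the characteristic function of $\frac{\beta}{4}\sech^2(\frac{\beta m}{2})$.

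Next I would split both integrations into a major arc, on which $|\psi|$ and $|\phi|$ are small (of a suitable size in powers of $\beta$), and minor arcs. On the major arc I substitute the transformation formula: the $\psi$-integral is evaluated by the saddle point (Laplace) method and yields $p(n)\bigl(1+o(1)\bigr)$, while the $\phi$-integral against $e^{-im\phi}$ reproduces $\frac{\beta}{4}\sech^2(\frac{\beta m}{2})$ by the Fourier identity above. On the minor arcs I would bound $C$ using the fact that it has no singularity as strong as the one at the dominant cusp once $\psi$ or $\phi$ is bounded away from $0$, so that these contributions are exponentially smaller than the main term and absorbed into the error.

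The step I expect to be the main obstacle is the uniform control of the error across the entire admissible range $|m|\le\frac{1}{\pi\sqrt6}\sqrt n\log n$. Because $m$ may grow like $\beta^{-1}\log n$, the Taylor expansion of the $\sinh$-prefactor about the saddle and the Gaussian approximation of the $\psi$-integral both acquire $m$-dependent remainders that must be weighed against the exponential savings on the minor arcs. Optimizing the width of the major arc so as to balance these competing contributions is the delicate point, and it is exactly this balancing that produces the stated relative error $O\!\left(\beta^{1/2}|m|^{1/3}\right)$ rather than a bound valid only for bounded $m$.
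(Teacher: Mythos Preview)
Your outline is essentially the strategy of \cite{BD13} (the paper merely quotes this theorem and then reruns the argument for the rank), but the organization differs in one important way. In the paper one does \emph{not} keep a double major/minor arc in $(\psi,\phi)$. Instead, one first integrates $\phi$ over the full period to obtain the one-variable generating function
\[
C_m(q)\;=\;\int_{-1/2}^{1/2} C\!\left(e^{2\pi i z},q\right)e^{-2\pi i m z}\,dz,
\]
and only then applies Wright's circle method in $q$ with the single split $|x|\le1$ versus $1\le|x|\le \pi m^{1/3}/\beta$ (where $s=\beta(1+ixm^{-1/3})$). Near the dominant pole one plugs in the transformation you wrote down and is left with an integral of the form
\[
\mathcal G_{m,1}(s)=\frac{4\pi}{s}\int_0^{1/2}\frac{\sin(\pi z)\,e^{2\pi^2z^2/s}}{\sinh(2\pi^2 z/s)}\cos(2\pi m z)\,dz.
\]
The $\sech^2$ is then extracted not via the Fourier identity you quote, but by Taylor-expanding $\sin$, $\exp$, $\cos$, evaluating the resulting $\sinh$-moments as Euler numbers $E_{2\ell+1}(0)$, and resumming with the generating-function identity $-\tfrac12\sech^2(t/2)=\sum_r E_{2r+1}(0)\,t^{2r}/(2r)!$. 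This is what produces the explicit error $O(\beta^{1/2}|m|^{1/3})$.

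Your Fourier-inversion route is morally equivalent, but the point you flag as the ``main obstacle'' is exactly where your sketch is too loose: after transformation the prefactor is $\dfrac{\pi\phi/z}{\sinh(\pi\phi/z)}$ with the \emph{complex} $z=\beta-i\psi$, not $\beta$, so the $\psi$- and $\phi$-integrals do not decouple and your Fourier identity for $\sech^2$ does not apply directly. The paper's ordering sidesteps this: by doing the $z$-integral first one obtains $\mathcal G_{m,1}(s)=\tfrac{s}{4}\sech^2(\beta m/2)+O(\beta^2 m^{2/3}\sech^2(\beta m/2))$ as a function of the complex parameter $s$, and only then integrates in $s$. If you want to make your version rigorous you will have to carry the $z$-dependence through the $\phi$-integral (e.g.\ by the same Euler-polynomial expansion) before appealing to the saddle in $\psi$; the optimal major-arc width in $\psi$ then turns out to be $|x|\le1$, i.e.\ $|\psi|\le\beta m^{-1/3}$, which is where the exponent $|m|^{1/3}$ in the error comes from.
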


For the rank the situation is more complicated since the generating function is not modular but mock modular, which means roughly that there exists some non-holomorphic function such that its sum with the generating function has nice modular properties. Nonetheless it is possible to apply a method similar to \cite{BD13} in this case. This way we prove that the same formula also holds for the rank.
\begin{theorem}\label{theo:main}
If $|m| \leq \frac{\sqrt{n}\log n}{\pi\sqrt{6}}$, we have as $n\rightarrow \infty$
\[N(m,n)=\frac{\beta}{4}\mathrm{sech}^2\left(\frac{\beta m}{2}\right) p(n)\left(1+O\left(\beta^{\frac12}|m|^{\frac13}\right)\right).\]
\end{theorem}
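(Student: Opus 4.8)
The plan is to follow the two-variable circle method of Bringmann--Dousse, with the crucial modification that the rank generating function must first be completed to a genuine (non-holomorphic) modular object. Writing
$$R(w;q)=\sum_{n\geq 0}\sum_m N(m,n)w^mq^n,$$
the starting point is the Hecke-type representation
$$R(w;q)=\frac{1-w}{(q;q)_\infty}\sum_{n\in\Z}\frac{(-1)^nq^{n(3n+1)/2}}{1-wq^n},$$
which is the mock-modular analogue of the identity for the crank, where the exponent $n(3n+1)/2$ is replaced by the smaller quadratic $n(n+1)/2$ and the resulting Appell--Lerch sum is honestly modular. After normalization the sum on the right is an instance of Zwegers' $\mu$-function, so it admits a non-holomorphic completion $\widehat R=R+R^-$, where $R^-$ is a period integral of incomplete-gamma/error-function type and $\widehat R$ transforms like a Jacobi form of weight $\tfrac12$. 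This is the only structural place where the argument departs from the modular crank case; everything downstream is arranged so that $R^-$ feeds only into the error term.

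First I would record the behaviour of $R(w;q)$ as $q=e^{-z}\to 1$, uniformly for $w$ on (a slight modification of) the unit circle. Setting $q=e^{2\pi i\tau}$, $w=e^{2\pi i u}$ and applying the modular inversion $\tau\mapsto -1/\tau$ to $\widehat R$, the dominant contribution comes from the factor $1/(q;q)_\infty\sim\sqrt{z/2\pi}\,e^{\pi^2/6z}$, which is responsible for the $p(n)$ growth, multiplied by a $w$-dependent modulation of shape $\frac{\zeta}{\sinh\zeta}$ with $\zeta$ a rescaling of $u$ by $\sim 1/\beta$. The key point to verify is that this leading modulation coincides exactly with the one produced by the crank generating function, so that the main terms of $N(m,n)$ and $M(m,n)$ are forced to agree; the differing theta-exponent and the completion term $R^-$ affect only lower-order pieces. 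Bounding $R^-$ on the major arc, uniformly in the range $|m|\le\frac{\sqrt n\log n}{\pi\sqrt 6}$, is the heart of the matter.

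With these uniform estimates in hand, I would write
$$N(m,n)=\frac{1}{2\pi i}\oint\left(\frac{1}{2\pi}\int_{-\pi}^{\pi}R(e^{i\phi};q)e^{-im\phi}\,d\phi\right)q^{-n-1}\,dq,$$
parametrize $q$ on the circle of radius $e^{-\beta}$ with $\beta=\pi/\sqrt{6n}$, and split into a major arc (where $\tau$ is close to the cusp and $\phi$ is small) and minor arcs. On the minor arcs the exponential growth $e^{\pi^2/6z}$ is strictly smaller, so their total contribution is negligible, exactly as in Wright's method, and the mock piece $R^-$ is likewise negligible there. On the major arc I would insert the leading asymptotic and carry out the resulting two-dimensional saddle-point integration. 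The crucial non-Gaussian feature is that the $w$-integral must be evaluated against the exact local factor $\frac{\zeta}{\sinh\zeta}$ rather than a Gaussian approximation: extending the $\phi$-integral to the whole line and using that $\frac{\zeta}{\sinh\zeta}$ is, up to scaling, the Fourier transform of $\sech^2$ produces precisely $\frac{\beta}{4}\sech^2\!\left(\tfrac{\beta m}{2}\right)$, while the truncation and the mock correction are absorbed into the error.

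The main obstacle is the uniform control of the completion term $R^-$ throughout the admissible range of $m$. Unlike the crank case, where $C(w;q)$ is honestly modular and the transformation is exact, here one must show that Zwegers' period integral, evaluated near the dominant cusp and integrated against $w^{-m-1}$, never reaches the order of the main term, even when $\beta|m|$ is allowed to grow like $\log n$ (so that $\sech^2(\beta m/2)$ can be as small as $n^{-1/6}$). Tracking this simultaneously with the saddle-point error and verifying that the combined error is of the stated size $O(\beta^{1/2}|m|^{1/3})$ --- which does tend to $0$ across the whole range, since $\beta^{1/2}|m|^{1/3}\ll n^{-1/12}(\log n)^{1/3}$ --- is the delicate bookkeeping that makes the rank genuinely harder than the crank.
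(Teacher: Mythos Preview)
Your outline is correct in spirit and lands on the same overall architecture as the paper: do the $w$-integral first to pass to the one-variable generating function $R_m(\tau)$, then run Wright's circle method on the circle $|q|=e^{-\beta}$, with the mock correction shown to be exponentially negligible so that the main term agrees with the crank case. What differs is the packaging of the mock part and a couple of intermediate reductions.

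The paper does \emph{not} work with the non-holomorphic completion $\widehat R=R+R^{-}$. Instead it invokes Zagier's three-term decomposition
\[
R(z;\tau)=\frac{q^{1/24}}{\eta(\tau)}\Bigl[\,i(\zeta^{1/2}-\zeta^{-1/2})\,\frac{\eta^3(3\tau)}{\theta(3z;3\tau)}-\zeta^{-1}(\cdots)A_1(3z,-\tau;3\tau)-\zeta(\cdots)A_1(3z,\tau;3\tau)\Bigr],
\]
and then applies Zwegers' transformation law for $A_1$ directly. That law produces a main term plus a Mordell integral $h(3z\pm\tau;3\tau)$ multiplied by a theta factor; the paper bounds this $h$-piece pointwise (citing a lemma of Bringmann--Mahlburg--Rhoades) to show it is $O(|s|^{-1/2}e^{-\frac{\pi^2}{6}\Re(1/s)})$, which is exponentially smaller than the dominant $e^{\pi^2/6s}$. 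This is morally the same obstacle you flag --- the Mordell integral \emph{is} the period integral underlying $R^{-}$ --- but the paper never forms the completion and never has to unwind an incomplete-gamma expression; it stays holomorphic throughout and bounds $h$ as a concrete oscillatory integral.

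A second device you do not mention, and which the Zagier decomposition forces, is an $m\pmod 3$ case split: because the building blocks involve $3z$ and $3\tau$, the paper rewrites $R_m(\tau)=3\,\frac{q^{1/24}}{\eta(\tau)}\int_{-1/6}^{1/6}g_m(z;\tau)e^{-2\pi imz}\,dz$ with three different $g_m$ depending on the residue of $m$. All three cases collapse to the same leading integrand $\frac{2\pi\sin(\pi z)e^{6\pi^2 z^2/s}}{3s\,\sinh(2\pi^2 z/s)}$, which is where your $\zeta/\sinh\zeta$ intuition lands. Finally, the paper extracts $\frac{s}{4}\sech^2(ms/2)$ from that integral not by a Fourier-transform identity but by Taylor-expanding $\sin$, $\cos$, and $e^{6\pi^2 z^2/s}$, evaluating the resulting moments $\int_0^\infty z^{2\ell+1}/\sinh(2\pi^2 z/s)\,dz$ in terms of Euler polynomials, and resumming via $-\tfrac12\sech^2(t/2)=\sum_r E_{2r+1}(0)\,t^{2r}/(2r)!$. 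Your Fourier-transform route would give the same answer more directly, at the cost of having to justify the tail truncation and the interchange carefully.
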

\begin{remark}
As in~\cite{BD13}, we could in fact replace the error term by $O( \beta^{\frac{1}{2}} m \alpha^2(m))$ for any $\alpha(m)$ such that $\frac{\log n}{n^{\frac{1}{4}}} = o \left(\alpha(m)\right)$ for all $|m|\leq \frac1{\pi\sqrt{6}}\sqrt{n}\log n$ and $\beta m \alpha(m) \rightarrow 0$ as $n \rightarrow \infty$. Here we chose $\alpha(m) = |m|^{-\frac{1}{3}}$ to avoid complicated expressions in the proof.
\end{remark}
\begin{remark}
After~\cite{BD13}, and simultaneously and independently to this paper, Parry and Rhoades~\cite{PR14} proved that the same formula holds for all of Garvan's $k$-ranks. The crank corresponds to the case $k=1$ and the rank to $k=2$. Their proof uses a completely different method: they use a sieving technique and do not rely on the modularity of the generating function.
\end{remark}
The rest of this paper is organized as follows: in \Cref{sec:prelim} we recall some important facts about Appell-Lerch sums, Mordell integrals, and also Euler polynomials, which are used in \Cref{sec:Trans} to prove some preliminary estimates for the rank generating function. In \Cref{sec:Asympt}, we use these results to prove the estimates close to and far from the dominant pole which we need in \Cref{sec:Circle} to establish our main result \Cref{theo:main}.

\section*{Acknowledgements}
The second author's research is supported by the DFG Graduiertenkolleg 1269 ``Global Structures in Geometry and Analysis'' at the University of Cologne. For many helpful discussions and comments on earlier versions of this paper the authors want to thank Kathrin Bringmann, Wolf Jung, Jeremy Lovejoy, Karl Mahlburg, and Jos\'e Miguel Zapata Rol\'on. 


\section{Preliminaries}\label{sec:prelim}
\subsection{(Mock) modular forms}
A key ingredient in the proof of our main theorem is the (mock) modularity of the rank generating function, defined as follows (throughout, if not specified elsewise, we always assume $\tau\in\bbH$, $z\in\R$, $q:=e^{2\pi i\tau}$, and $\zeta:=e^{2\pi iz}$),
\begin{equation}
\label{eq:rankgen}
R(z;\tau):=\sum\limits_{n=0}^\infty\sum\limits_{m\in\Z} N(m,n)\zeta^mq^n=\frac{1-\zeta}{(q)_\infty}\sum\limits_{n\in\Z} \frac{(-1)^nq^{\frac{3n^2+n}{2}}}{1-\zeta q^n}.
\end{equation}
Let us further define
\begin{equation}
\label{eq:eta}
\eta(\tau):=q^\frac{1}{24}(q)_\infty=q^\frac{1}{24}\prod\limits_{n=1}^\infty (1-q^n),
\end{equation}
and
\begin{equation}
\label{eq:theta}
\theta(z;\tau):=iq^\frac{1}{8}\zeta^{\frac 12}\prod\limits_{n=1}^\infty (1-q^n)(1-\zeta q^{n})(1-\zeta^{-1}q^{n-1}).
\end{equation}

In this section we are going to collect transformation properties for $\eta$ and $\theta$ and recall the definition and most important properties of Appell-Lerch sums as studied by S. Zwegers in \cite{ZwegersDiss}.
\begin{lemma}
\label{lem:transfo}
For $\eta$ and $\theta$ as in \eqref{eq:eta} and \eqref{eq:theta} we have the following transformation laws,
\begin{align}\label{eq:etatrafo}
\eta\left(-\frac{1}{\tau}\right)&=\sqrt{-i\tau}\eta(\tau),\\
\theta\left(\frac z\tau;-\frac 1\tau\right)&=-i\sqrt{-i\tau}e^{\frac{\pi i z^2}{\tau}}\theta(z;\tau),
\end{align}
where $\sqrt{\bullet}$ denotes the principal branch of the holomorphic square-root.
\end{lemma}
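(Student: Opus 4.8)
The plan is to reduce both transformation laws to Poisson summation, proving the law for $\theta$ first and then deducing the one for $\eta$ from it. The natural starting point is the series representation obtained from the Jacobi triple product,
\begin{equation*}
\theta(z;\tau)=\sum_{\nu\in\Z+\frac12}e^{\pi i\nu^2\tau+2\pi i\nu\left(z+\frac12\right)},
\end{equation*}
which one checks against the product \eqref{eq:theta} by comparing leading terms in $q$: the $\nu=\pm\frac12$ contributions reproduce $iq^{\frac18}\zeta^{\frac12}(1-\zeta^{-1})$, matching the linearization of the $n=1$ factor of the last product. This is the representation used by Zwegers in \cite{ZwegersDiss}.

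To prove the second identity I would substitute $z\mapsto z/\tau$ and $\tau\mapsto -1/\tau$ into the series and apply Poisson summation over the coset $\Z+\frac12$, in the form $\sum_{n\in\Z}f\left(n+\frac12\right)=\sum_{k\in\Z}(-1)^k\widehat f(k)$. The summand is a Gaussian $\exp\left(-\frac{\pi i}{\tau}x^2+2\pi i x\left(\frac z\tau+\frac12-k\right)\right)$, and the relevant integral is $\int_\R e^{-\pi a x^2+2\pi i bx}\,\d x=a^{-\frac12}e^{-\pi b^2/a}$ with $a=i/\tau$. Here $\Re(a)=\Im(\tau)/|\tau|^2>0$ for $\tau\in\bbH$, so the principal branch applies and $a^{-\frac12}=\sqrt{-i\tau}$; this branch bookkeeping is the one genuinely delicate point. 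Completing the square produces the prefactor $\sqrt{-i\tau}\,e^{\pi i z^2/\tau}$, and reindexing $k\mapsto\nu=\frac12-k$ turns the remaining sum back into $\theta(z;\tau)$ up to the scalar $-i$ coming from $e^{\pi i\nu}$, which yields exactly the claimed factor $-i\sqrt{-i\tau}\,e^{\pi i z^2/\tau}$.

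For $\eta$ I would differentiate the $\theta$-transformation in $z$ and evaluate at $z=0$. Since $\theta(0;\tau)=0$ and the Gaussian prefactor has vanishing $z$-derivative at $z=0$, only the term hitting $\theta'$ survives on each side, and Jacobi's derivative formula $\frac{\partial}{\partial z}\theta(z;\tau)\big|_{z=0}=-2\pi\eta(\tau)^3$—itself read off from \eqref{eq:theta} by linearizing the factor $1-\zeta^{-1}$—converts the identity into $\eta(-1/\tau)^3=\left(\sqrt{-i\tau}\right)^3\eta(\tau)^3$. Taking cube roots leaves an ambiguity by a third root of unity, which I would pin down to $1$ by specializing at the fixed point $\tau=i$, where $\sqrt{-i\tau}=1$ and $\eta(i)>0$ forces the trivial root. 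The only real obstacles are these two normalization issues—the branch of the square root in the Poisson step and the cube-root-of-unity in the $\eta$ step—both resolved by the positivity available at $\tau=i$ and along the imaginary axis; everything else is a routine Gaussian computation.
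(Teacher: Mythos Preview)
Your argument is correct: the Poisson-summation proof of the $\theta$-transformation starting from the half-integer series, followed by differentiation at $z=0$ together with Jacobi's derivative identity $\theta'(0;\tau)=-2\pi\eta(\tau)^3$ and the fixed-point check at $\tau=i$, is a standard and complete route to both formulae. The branch and cube-root normalizations you flag are indeed the only delicate points, and you handle them properly.

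The paper, however, does not prove this lemma at all; it simply records the transformation laws as classical facts (implicitly referring to \cite{ZwegersDiss}) and moves on. So your proposal is not an alternative to the paper's proof but rather a proof where the paper offers none. What your approach buys is self-containment; what the paper's choice buys is brevity, since these identities are entirely standard in the literature on modular and Jacobi forms.
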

Following Chapter 1 of \cite{ZwegersDiss} we define the following.
\begin{definition}
\begin{enumerate}[(i)]
\item For $z\in\C$ and $\tau\in\bbH$, we define the \emph{Mordell integral} as
\[h(z)=h(z;\tau)=\int\limits_{-\infty}^\infty \frac{e^{\pi i\tau w^2-2\pi zw}}{\cosh(\pi w)}dw.\]
\item For $\tau\in\bbH$ and $u,v\in\C\setminus (\Z\oplus\Z\tau)$, we call the expression
\begin{equation}
\label{eq:mu}
A_1(u,v;\tau)=e^{\pi i u}\sum\limits_{n\in\Z}\frac{(-1)^nq^\frac{n^2+n}{2}e^{2\pi inv}}{1-e^{2\pi i u}q^n}
\end{equation}
an \emph{Appell-Lerch sum}. We also call $\mu(u,v;\tau):=\tfrac{A_1(u,v;\tau)}{\theta(v;\tau)}$ a normalized Appell-Lerch sum. 
\end{enumerate}
\end{definition}
We need some transformation properties of these functions:
\begin{lemma}[cf. Proposition 1.2 in \cite{ZwegersDiss}]\label{lem:htrafo}
The Mordell integral has the following properties:
\begin{enumerate}[(i)]
\item $h(z)+e^{-2\pi iz-\pi i\tau}h(z+\tau)=2\zeta^{-\frac 12}q^{-\frac 18}$,
\item $h\left(\frac z\tau;-\frac 1\tau\right)=\sqrt{-i\tau}e^{-\frac{\pi iz^2}{\tau}}h(z;\tau)$.
\end{enumerate}
\end{lemma}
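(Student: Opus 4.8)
The plan is to prove both identities by contour-integration and Fourier-analytic arguments, exploiting the rapid decay of the integrand guaranteed by $\Im\tau>0$.

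For part (i), I would shift the contour of integration from $\R$ to $\R+i$. Writing $f(w):=\frac{e^{\pi i\tau w^2-2\pi zw}}{\cosh(\pi w)}$, the key observation is that $\cosh(\pi(w+i))=-\cosh(\pi w)$, so substituting $w\mapsto w+i$ in $\int_{\R+i}f$ and collecting the exponential factors produces exactly $-e^{-2\pi iz-\pi i\tau}h(z+\tau)$. Between the two horizontal lines the only pole of $f$ is at $w=\tfrac i2$ (the zeros of $\cosh(\pi w)$ sit at $w=i(n+\tfrac12)$), and since $e^{\pi i\tau w^2}$ decays like $e^{-\pi(\Im\tau)(\Re w)^2}$ the connecting vertical segments at $\Re w=\pm\infty$ contribute nothing. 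The residue theorem then gives $h(z)+e^{-2\pi iz-\pi i\tau}h(z+\tau)=2\pi i\,\Res_{w=i/2}f(w)$; computing the residue via $\cosh(\pi w)\sim i\pi(w-\tfrac i2)$ near $w=\tfrac i2$ yields $2e^{-\pi iz-\frac{\pi i\tau}{4}}=2\zeta^{-\frac12}q^{-\frac18}$, which is the claimed identity.

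For part (ii), I would use the self-duality of the hyperbolic secant under the Fourier transform, namely $\frac{1}{\cosh(\pi w)}=\int_{-\infty}^\infty\frac{e^{2\pi iwx}}{\cosh(\pi x)}\,dx$. Inserting this into the definition of $h(z;\tau)$, interchanging the order of integration (justified by absolute convergence, as $\Im\tau>0$ forces Gaussian decay in $w$ and exponential decay in $x$), and evaluating the inner Gaussian integral $\int_\R e^{\pi i\tau w^2+2\pi(ix-z)w}\,dw=\frac{1}{\sqrt{-i\tau}}e^{\frac{\pi i(ix-z)^2}{\tau}}$ leaves a one-dimensional integral which, after expanding $(ix-z)^2$ and simplifying the exponent, is precisely $\frac{e^{\pi iz^2/\tau}}{\sqrt{-i\tau}}h(-z/\tau;-1/\tau)$. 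Since the substitution $w\mapsto-w$ shows that $h$ is even in its first argument, this reads $h(z;\tau)=\frac{e^{\pi iz^2/\tau}}{\sqrt{-i\tau}}h(z/\tau;-1/\tau)$, which rearranges to the stated transformation law.

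I expect the main obstacles to be analytic rather than algebraic: in (i) one must check carefully that the contour shift is legitimate and that the two vertical segments vanish in the limit; in (ii) the delicate point is the rigorous justification of Fubini's theorem together with the Gaussian evaluation for a complex quadratic coefficient, where the correct branch of $\sqrt{-i\tau}$ (the principal branch, compatible with $\Im\tau>0$) must be chosen consistently so as to match the normalization already fixed in \eqref{eq:etatrafo}. Once these convergence and branch-of-the-square-root issues are settled, both identities follow directly from the displayed computations.
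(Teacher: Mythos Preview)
The paper does not supply its own proof of this lemma; it simply records the statement with a pointer to Proposition~1.2 of Zwegers' thesis~\cite{ZwegersDiss}. Your proposal is correct and is in fact essentially the argument Zwegers gives there: part~(i) by shifting the line of integration from $\R$ to $\R+i$ and collecting the residue at $w=\tfrac{i}{2}$, and part~(ii) via the Fourier self-reciprocity of $\sech(\pi w)$ combined with the Gaussian integral evaluation. The analytic checkpoints you flag (vanishing of the vertical segments, Fubini, and the choice of the principal branch of $\sqrt{-i\tau}$) are exactly the ones that need attention, and they go through without difficulty since $\Im\tau>0$.
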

\begin{lemma}[cf. Proposition 1.4 and 1.5 in \cite{ZwegersDiss}]\label{lem:mutrafo}
\begin{enumerate}[(i)]
\item One has 
\[\mu(-u,-v)=\mu(u,v).\]
\item Under modular inversion, the Appell-Lerch sum has the following transformation law,
\[\frac{1}{\sqrt{-i\tau}}e^{\frac{\pi i(u-v)^2}{\tau}}\mu\left(\frac u\tau,\frac{v}{\tau};-\frac 1\tau\right)+\mu(u,v;\tau)=\frac{1}{2i}h(u-v;\tau),\]
or equivalently
\[-\frac 1\tau e^{\frac{\pi i(u^2-2uv)}{\tau}}A_1\left(\frac u\tau,\frac v\tau;-\frac 1\tau\right)+A_1(u,v;\tau)=\frac{1}{2i}h(u-v;\tau)\theta(v;\tau).\]
\end{enumerate}
\end{lemma}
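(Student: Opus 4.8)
My plan is to treat the two parts separately, proving the symmetry (i) by a direct manipulation of the defining series and reducing the modular transformation (ii) to an identity of entire functions of $u$ that is forced by matching functional equations. For part (i) I would work with $A_1$ rather than $\mu$. Substituting $n\mapsto -n$ in the series \eqref{eq:mu} for $A_1(-u,-v;\tau)$ and using $\tfrac{1}{1-e^{-2\pi iu}q^{-n}}=\tfrac{-e^{2\pi iu}q^n}{1-e^{2\pi iu}q^n}$ together with $\tfrac{n^2-n}{2}+n=\tfrac{n^2+n}{2}$, the sum collapses to $-A_1(u,v;\tau)$, so $A_1(-u,-v;\tau)=-A_1(u,v;\tau)$. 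On the other hand the product \eqref{eq:theta} yields the oddness $\theta(-v;\tau)=-\theta(v;\tau)$ after reindexing one of its factors. Dividing, the two signs cancel and $\mu(-u,-v)=\mu(u,v)$ follows.

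Before proving (ii) I would record that its two displayed forms are equivalent: dividing the $A_1$-identity by $\theta(v;\tau)$ and inserting the transformation $\theta(v/\tau;-1/\tau)=-i\sqrt{-i\tau}e^{\pi iv^2/\tau}\theta(v;\tau)$ from \Cref{lem:transfo} converts the prefactor $-\tfrac1\tau e^{\pi i(u^2-2uv)/\tau}$ into $\tfrac{1}{\sqrt{-i\tau}}e^{\pi i(u-v)^2/\tau}$ and turns $A_1/\theta$ into $\mu$, so it suffices to prove the $A_1$-form. I would then set
\[
G(u):=A_1(u,v;\tau)-\tfrac1\tau e^{\pi i(u^2-2uv)/\tau}A_1\!\left(\tfrac u\tau,\tfrac v\tau;-\tfrac1\tau\right)-\tfrac{1}{2i}h(u-v;\tau)\theta(v;\tau)
\]
and aim to show $G\equiv 0$ as a function of $u$ (for fixed $v\in\C\setminus(\Z\oplus\Z\tau)$ and $\tau\in\bbH$).

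The heart of the argument is to pin $G$ down by its functional equations in $u$. First, a direct computation with \eqref{eq:mu} gives the antiperiodicity $A_1(u+1,v;\tau)=-A_1(u,v;\tau)$ and, after the reindexing $n\mapsto n+1$ and the split $\tfrac{q^{-n}}{1-e^{2\pi iu}q^n}=q^{-n}+\tfrac{e^{2\pi iu}}{1-e^{2\pi iu}q^n}$, a quasi-periodicity $A_1(u+\tau,v;\tau)=-e^{2\pi i(u-v)+\pi i\tau}A_1(u,v;\tau)+(\text{theta term})$, where the inhomogeneous term is a Jacobi theta series and hence proportional to $\theta(v;\tau)$. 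Second, the matching behaviour of the $h$-term comes from \Cref{lem:htrafo}: the functional equation $h(z)+e^{-2\pi iz-\pi i\tau}h(z+\tau)=2e^{-\pi iz}q^{-1/8}$ supplies exactly the inhomogeneous part, while the modular law $h(z/\tau;-1/\tau)=\sqrt{-i\tau}e^{-\pi iz^2/\tau}h(z;\tau)$ is what is needed to handle the copy $A_1(u/\tau,v/\tau;-1/\tau)$ living at $-1/\tau$. Verifying that the three summands transform compatibly shows that $G$ is entire (the lattice poles of the two $A_1$-terms cancel) and obeys $G(u+1)=-G(u)$ together with a quasi-periodicity under $u\mapsto u+\tau$ carrying the same multiplier as $\theta(u-v;\tau)$; a Liouville/uniqueness argument then forces $G$ to be a constant multiple of $\theta(u-v;\tau)$, and evaluating at a convenient point (or invoking the symmetry from (i)) shows the constant vanishes.

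The main obstacle is the bookkeeping in this final step: one must track the quasi-periodicity multipliers and the inhomogeneous theta terms of all three summands simultaneously, and in particular check that the residues of $A_1(u,v;\tau)$ and of $-\tfrac1\tau e^{\pi i(u^2-2uv)/\tau}A_1(u/\tau,v/\tau;-1/\tau)$ at the lattice points $u\in\Z\oplus\Z\tau$ cancel, so that $G$ is genuinely entire. This pole cancellation, which is where the modular transformation of the Mordell integral in \Cref{lem:htrafo} enters most delicately, is the step I expect to be the most technical; everything else reduces to the elementary series identities above and the transformation laws already recorded in \Cref{lem:transfo,lem:htrafo}.
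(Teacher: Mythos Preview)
The paper does not supply its own proof of this lemma; it is quoted verbatim from Zwegers' thesis (Propositions~1.4 and~1.5) and used as a black box. Your proposal is correct and in fact reconstructs Zwegers' original argument: part~(i) via the reindexing $n\mapsto -n$ in $A_1$ together with the oddness of $\theta$, and part~(ii) by matching the elliptic functional equations of $A_1(u,v;\tau)$, of its modular transform, and of $h(u-v;\tau)\theta(v;\tau)$ so that their combination $G(u)$ is entire with the theta multiplier system, and then killing the remaining constant via symmetry. The only caveat is that your sketch of the quasi-periodicity of $A_1$ under $u\mapsto u+\tau$ and the pole cancellation is, as you acknowledge, the technical core and would need to be written out carefully (the inhomogeneous term that appears is indeed $-ie^{2\pi i(u-v)+\pi i\tau}\theta(v;\tau)$ up to normalisation, matching the $h$ functional equation from \Cref{lem:htrafo}(i)); but the strategy is sound and is precisely how the cited reference proceeds.
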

\subsection{Euler polynomials and Euler numbers}
We now recall some facts about Euler polynomials.
We define the \emph{Euler polynomials} by the generating function
\begin{equation}\label{eq:Eulerpoly}
\frac{2e^{xz}}{e^z+1}=:\sum\limits_{k=0}^\infty E_k(x)\frac{z^k}{k!}.
\end{equation}
Let us recall two lemmas from~\cite{BD13} which will be useful in our proof.
\begin{lemma}\label{2.2}
We have
$$
-\frac12 \text{\rm sech}^2 \left( \frac{t}2\right) = \sum_{r=0}^\infty E_{2r+1} (0) \frac{t^{2r}}{(2r)!}.
$$
\end{lemma}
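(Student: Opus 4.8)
The plan is to extract this identity from the generating function \eqref{eq:Eulerpoly} by specializing $x=0$ and isolating the odd-degree terms in $z$. Setting $x=0$ in \eqref{eq:Eulerpoly} gives
\[
\frac{2}{e^z+1}=\sum_{k=0}^\infty E_k(0)\frac{z^k}{k!},
\]
so the series $\sum_{r\geq 0}E_{2r+1}(0)\frac{t^{2r}}{(2r)!}$ that we wish to identify is closely tied to the \emph{odd part} of the function $f(z):=\frac{2}{e^z+1}$.

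First I would compute the odd part $\frac{f(z)-f(-z)}{2}$ explicitly. Since $f(-z)=\frac{2}{e^{-z}+1}=\frac{2e^z}{e^z+1}$, we find $f(z)+f(-z)=2$, whence
\[
\frac{f(z)-f(-z)}{2}=\frac{1-e^z}{e^z+1}=-\frac{e^{z/2}-e^{-z/2}}{e^{z/2}+e^{-z/2}}=-\tanh\!\left(\frac z2\right).
\]
Matching this with the odd-degree part of the power series above yields the closed form
\[
\sum_{r=0}^\infty E_{2r+1}(0)\frac{z^{2r+1}}{(2r+1)!}=-\tanh\!\left(\frac z2\right).
\]

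The final step is to differentiate both sides in $z$. On the left, $\frac{d}{dz}\frac{z^{2r+1}}{(2r+1)!}=\frac{z^{2r}}{(2r)!}$, producing exactly the series in the statement; on the right, $\frac{d}{dz}\tanh(z/2)=\frac12\sech^2(z/2)$, so we obtain $-\frac12\sech^2(z/2)$. Renaming $z$ to $t$ then gives the claimed identity.

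I expect no serious obstacle here: the only points requiring care are the bookkeeping that justifies matching the odd-degree Taylor coefficients of $f$ with those of $-\tanh(z/2)$ term by term (equivalently, that the even-degree part of $f$ beyond the constant term vanishes, which is immediate from $f(z)+f(-z)=2$), and the legitimacy of termwise differentiation on a neighborhood of $0$ where both series converge. Both are routine, so the computation of the odd part is really the entire content of the lemma.
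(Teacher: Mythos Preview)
Your proof is correct. The paper itself does not prove this lemma; it merely recalls it from \cite{BD13}, so there is no in-paper argument to compare against, and your derivation via the odd part of $2/(e^z+1)$ followed by differentiation is a clean and standard way to obtain the identity.
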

\begin{lemma}\label{EulerLemma}
Setting for $j\in\N_0$
\begin{equation}\label{Eulerintegral}
\mathcal{E}_j := \int_0^\infty \frac{z^{2j+1}}{\sinh (\pi z)} dz,
\end{equation}
we get
$$
\mathcal{E}_j = \frac{(-1)^{j+1} E_{2j+1} (0)}{2}.
$$
\end{lemma}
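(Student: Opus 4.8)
The plan is to evaluate $\mathcal{E}_j$ by packaging all the integrals into a single generating function, recognising that generating function in closed form, and then matching it against the expansion furnished by \Cref{2.2}. Concretely, I would first record the elementary identity
\[\int_0^\infty \frac{\sin(tz)}{\sinh(\pi z)}\,dz=\frac12\tanh\left(\frac t2\right),\]
valid for real $t$ with $|t|<\pi$ (in fact for all real $t$). To establish it one expands $\tfrac{1}{\sinh(\pi z)}=2\sum_{n\ge0}e^{-(2n+1)\pi z}$ for $z>0$, integrates termwise via $\int_0^\infty e^{-az}\sin(tz)\,dz=\frac{t}{a^2+t^2}$, and sums the resulting series $2t\sum_{n\ge0}\frac{1}{t^2+(2n+1)^2\pi^2}$ using the classical partial-fraction expansion of $\tanh$.

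Next I would expand $\sin(tz)=\sum_{j\ge0}\frac{(-1)^j(tz)^{2j+1}}{(2j+1)!}$ and interchange summation and integration to obtain
\[\sum_{j=0}^\infty \frac{(-1)^j t^{2j+1}}{(2j+1)!}\mathcal{E}_j=\frac12\tanh\left(\frac t2\right).\]
The interchange is the one genuinely analytic point: it is justified for $|t|<\pi$ by estimating the Taylor remainder of $\sin$, since $\bigl|\sin(tz)-\sum_{j\le N}\tfrac{(-1)^j(tz)^{2j+1}}{(2j+1)!}\bigr|\le \tfrac{|tz|^{2N+3}}{(2N+3)!}$ integrates against $\tfrac{1}{\sinh(\pi z)}$ to $\tfrac{|t|^{2N+3}}{(2N+3)!}\mathcal{E}_{N+1}$, and the rapidly decaying factor $\mathcal{E}_{N+1}/(2N+3)!\sim 2\pi^{-(2N+4)}$ forces this to vanish as $N\to\infty$ throughout $|t|<\pi$ (the radius of convergence matching the nearest pole $t=i\pi$ of $\tanh(t/2)$).

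Differentiating this power-series identity in $t$ then gives
\[\sum_{j=0}^\infty \frac{(-1)^j t^{2j}}{(2j)!}\mathcal{E}_j=\frac14\sech^2\left(\frac t2\right).\]
Finally I would invoke \Cref{2.2} to rewrite $\sech^2(t/2)=-2\sum_{r\ge0}E_{2r+1}(0)\tfrac{t^{2r}}{(2r)!}$, so that the right-hand side equals $-\tfrac12\sum_{j\ge0}E_{2j+1}(0)\tfrac{t^{2j}}{(2j)!}$; comparing the coefficient of $\tfrac{t^{2j}}{(2j)!}$ on both sides yields $(-1)^j\mathcal{E}_j=-\tfrac12 E_{2j+1}(0)$, which is precisely the claimed formula $\mathcal{E}_j=\tfrac{(-1)^{j+1}E_{2j+1}(0)}{2}$.

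I expect the only real obstacle to be the justification of the term-by-term integration; a purely formal manipulation suffices to discover the answer, but to make the argument rigorous one must control the interchange on a suitable disc, which the remainder estimate above handles. An alternative, more computational route avoids the $\tanh$ integral altogether: termwise integration of $z^{2j+1}/\sinh(\pi z)$ directly gives $\mathcal{E}_j=\tfrac{2(2j+1)!}{\pi^{2j+2}}(1-2^{-(2j+2)})\zeta(2j+2)$, after which one matches against the standard expression of $E_{2j+1}(0)$ as a zeta value; I nonetheless prefer the generating-function route, since it uses \Cref{2.2} directly and keeps the argument self-contained within the tools already developed.
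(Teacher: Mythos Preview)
Your argument is correct. The generating-function identity $\int_0^\infty \tfrac{\sin(tz)}{\sinh(\pi z)}\,dz=\tfrac12\tanh(t/2)$ is established cleanly via the geometric expansion and the partial-fraction formula for $\tanh$, the interchange of sum and integral is properly justified by your remainder bound (note that the estimate $\mathcal{E}_{N+1}/(2N+3)!\asymp\pi^{-(2N+4)}$ can be obtained directly from the same geometric expansion, so there is no circularity), and the final coefficient comparison against \Cref{2.2} is routine.

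There is nothing to compare against in this paper: the authors explicitly \emph{recall} \Cref{EulerLemma} from \cite{BD13} and give no proof here. Your approach is a perfectly good way to supply one, and it has the virtue of staying inside the toolkit already assembled (\Cref{2.2}) rather than importing the explicit zeta-value formula for $E_{2j+1}(0)$.
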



\section{Transformation Formulae}\label{sec:Trans}
In this section, we split $R(z;\tau)$ into several summands to determine its transformation behaviour under $\tau\mapsto-\tfrac 1\tau$.
\begin{lemma}
For all $\tau \in \bbH, z \in \R$, we have
\begin{equation}
\label{eq:rankJacobi}
\begin{aligned}
R(z;\tau)=\frac{q^{\frac{1}{24}}}{\eta(\tau)}\left[\frac{i\left(\zeta^{\frac 12}-\zeta^{-\frac 12}\right)\eta^3(3\tau)}{\theta(3z;3\tau)}\right.&-\zeta^{-1}\left(\zeta^{\frac 12}-\zeta^{-\frac 12}\right)A_1(3z,-\tau;3\tau)\\
&\left.-\zeta\left(\zeta^{\frac 12}-\zeta^{-\frac 12}\right)A_1(3z,\tau;3\tau)\right]
\end{aligned}
\end{equation}
with $A_1$ as in \eqref{eq:mu}.
\end{lemma}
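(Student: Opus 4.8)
The plan is to start from the Hecke-type series representation of the rank generating function on the right-hand side of \eqref{eq:rankgen}, namely
\[
R(z;\tau)=\frac{1-\zeta}{(q)_\infty}\sum_{n\in\Z}\frac{(-1)^nq^{\frac{3n^2+n}{2}}}{1-\zeta q^n},
\]
and to massage the inner sum into a combination of Appell-Lerch sums $A_1$ as defined in \eqref{eq:mu}. The natural first move is to split the summation index $n\in\Z$ according to its residue modulo $3$, writing $n=3k$, $n=3k+1$, and $n=3k-1$ (equivalently $3k+2$). Under this splitting the quadratic exponent $\tfrac{3n^2+n}{2}$ should, after completing the square in each residue class, reorganize into the shape $\tfrac{(3)m^2+m}{2}$ that appears in the definition of $A_1(u,v;3\tau)$ with modular parameter $3\tau$; correspondingly the factor $q^n=q^{3k},q^{3k+1},q^{3k-1}$ in the denominator $1-\zeta q^n$ should match the $1-e^{2\pi iu}(q^3)^k$ pattern with $e^{2\pi iu}$ essentially $\zeta q^{\pm1}=e^{2\pi i(z\pm\tau)}$ and $u=3z$ after rescaling $\tau\mapsto 3\tau$.

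Concretely, I expect the $n\equiv 0$ class to produce, after the $\tfrac{1-\zeta}{(q)_\infty}$ prefactor is rewritten using \eqref{eq:eta} and \eqref{eq:theta}, the theta-quotient term
\[
\frac{i\left(\zeta^{\frac12}-\zeta^{-\frac12}\right)\eta^3(3\tau)}{\theta(3z;3\tau)},
\]
since a classical Jacobi-triple-product or quintuple-product identity converts $\sum_k(-1)^k q^{3\cdot\frac{3k^2+k}{2}}/(1-\zeta q^{3k})$ into exactly such a ratio of an $\eta^3$ to a Jacobi theta function $\theta(3z;3\tau)$. The two remaining classes $n\equiv\pm1\pmod 3$ should each collapse, after completing the square and extracting the appropriate power of $\zeta$ and $q$, into one Appell-Lerch sum $A_1(3z,\mp\tau;3\tau)$ with prefactors $\zeta^{\mp1}\left(\zeta^{\frac12}-\zeta^{-\frac12}\right)$, matching the two displayed $A_1$-terms. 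Throughout, the common factor $q^{1/24}/\eta(\tau)=1/(q)_\infty$ is pulled out using \eqref{eq:eta} to reconcile the prefactor $1/(q)_\infty$ with the stated $q^{1/24}/\eta(\tau)$.

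The main obstacle will be the bookkeeping in completing the square for the quadratic form $\tfrac{3n^2+n}{2}$ in each of the three residue classes so that the exponents align \emph{exactly} with the $\tfrac{m^2+m}{2}$ normalization built into $A_1(u,v;3\tau)$ (note the base is $q^3$, so one must carefully track how $\tfrac{3(3k\pm1)^2+(3k\pm1)}{2}$ splits into a constant shift, a term $3\cdot\tfrac{k^2+k}{2}$ matching the summation variable, and cross terms that get absorbed into the $e^{2\pi i v}=e^{2\pi i(\mp\tau)}$ and $e^{\pi iu}=e^{3\pi iz}$ factors). Getting the signs of the shifts and the precise powers of $\zeta$ and $q$ correct is where errors are easy to make; the identity is essentially forced once these are pinned down, so I would verify the final expression by expanding both sides as $q$-series to low order as a consistency check rather than relying solely on the symbolic manipulation. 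The identification of the $n\equiv0$ term as the $\eta^3(3\tau)/\theta(3z;3\tau)$ quotient via the triple product is routine once the series is isolated, and the Appell-Lerch identifications follow directly from Definition \eqref{eq:mu}.
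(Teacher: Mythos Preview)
The paper supplies no proof here; it simply cites Theorem~7.1 of \cite{Zag07} and corrects two typos. Your proposal attempts an actual argument, but the central step does not work as described.

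Splitting the summation index $n$ modulo $3$ does \emph{not} yield the three terms of \eqref{eq:rankJacobi}. Writing $n=3k+j$ turns the exponent $\tfrac{3n^2+n}{2}$ into $\tfrac{27k^2+\cdots}{2}$, with leading coefficient $\tfrac{27}{2}$ on $k^2$, whereas $A_1(3z,v;3\tau)$ carries $(q^3)^{k(k+1)/2}$, with leading coefficient $\tfrac{3}{2}$; these cannot be reconciled by completing the square. The denominator $1-\zeta q^{3k+j}$ likewise forces $e^{2\pi iu}=\zeta q^j$, hence $u=z+j\tau$, not $u=3z$ --- you noticed this tension yourself (``$e^{2\pi iu}$ essentially $\zeta q^{\pm1}$ \ldots\ and $u=3z$'') but did not resolve it. The correct elementary manoeuvre acts on the \emph{denominator} rather than the index: use $\tfrac{1}{1-\zeta q^n}=\tfrac{1+\zeta q^n+\zeta^2 q^{2n}}{1-\zeta^3 q^{3n}}$. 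Now the denominator matches that of $A_1(3z,\,\cdot\,;3\tau)$, and the three numerator pieces carry exponents $\tfrac{3n^2+3n}{2}-n$, $\tfrac{3n^2+3n}{2}$, $\tfrac{3n^2+3n}{2}+n$, i.e.\ $(q^3)^{n(n+1)/2}$ times $q^{-n},1,q^{n}$, giving precisely $\zeta^{-3/2}A_1(3z,-\tau;3\tau)$, $\zeta^{-1/2}A_1(3z,0;3\tau)$, $\zeta^{1/2}A_1(3z,\tau;3\tau)$. The middle term evaluates to $-i\eta^3(3\tau)/\theta(3z;3\tau)$ by the classical crank identity, and multiplying by $(1-\zeta)/(q)_\infty=-\zeta^{1/2}(\zeta^{1/2}-\zeta^{-1/2})q^{1/24}/\eta(\tau)$ recovers \eqref{eq:rankJacobi} exactly.
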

This was first mentioned in Theorem 7.1 of \cite{Zag07}, but contained a slight typo there. To be precise, the factor $i$ in front of the first summand was missing and the sign in front of the second and third was wrong.

Now we want to determine some asymptotic expressions for the three summands in \eqref{eq:rankJacobi}. To do, so let us write $\tau=\frac{is}{2\pi}$, $s=\beta(1+ix m^{\frac{-1}{3}})$ with $x \in \R$ satisfying $|x|\leq\frac{\pi m^{\frac13}}{\beta}$.

\begin{lemma}\label{lem:asympt} 
Assume that $|z| < \frac 13$. Then for $|x| \leq 1$, we have as $n\rightarrow\infty$

\[i\frac{\eta^3(3\tau)}{\theta(3z;3\tau)}=\frac{-i\pi e^{\frac{6\pi^2 z^2}{s}}}{3s \sinh\left(\frac{2\pi^2 z}{s}\right)}\left[1+O\left(e^{-\frac{4\pi^2}{3}\Re\left(\frac 1s\right)(1-3z)}\right)+O\left(e^{-\frac{4\pi^2}{3}\Re\left(\frac 1s\right)(1+3z)}\right)\right],\]
\end{lemma}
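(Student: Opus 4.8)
The plan is to exploit that, throughout the relevant range, $\tau$ tends to $0$ (indeed $s\to0$ with $\Re(\tfrac1s)>0$), so that the modular inversion $\tau\mapsto-\tfrac1\tau$ moves us to a nome which is exponentially small. Concretely, writing $\tau'=3\tau$, I would pass to the arguments $w:=-\tfrac1{\tau'}=-\tfrac1{3\tau}=\tfrac{2\pi i}{3s}$ and $v:=\tfrac{3z}{\tau'}=\tfrac z\tau=-\tfrac{2\pi i z}{s}$, and set $\tilde q:=e^{2\pi i w}=e^{-\frac{4\pi^2}{3s}}$ and $\tilde\zeta:=e^{2\pi i v}=e^{\frac{4\pi^2 z}{s}}$. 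Since $|\tilde q|=e^{-\frac{4\pi^2}{3}\Re(1/s)}$ is exponentially small as $n\to\infty$, the infinite products defining $\eta$ and $\theta$ at $w$ reduce to their leading factors up to exponentially small corrections, and the whole estimate splits into a transformation step and an expansion step.

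For the transformation step I would apply the laws in \Cref{lem:transfo}: from \eqref{eq:etatrafo} one gets $\eta^3(3\tau)=(-i\tau')^{-3/2}\eta^3(w)$, and the theta law with $z\mapsto 3z$ gives $\theta(3z;3\tau)=i\,(-i\tau')^{-1/2}e^{-9\pi i z^2/\tau'}\theta(v;w)$. The half-integer powers of $-i\tau'$ collapse to the single factor $(-i\tau')^{-1}=\tfrac{2\pi}{3s}$, while $e^{9\pi i z^2/\tau'}=e^{6\pi^2 z^2/s}$, so that after multiplying by $i$ the prefactor of the claim appears:
\[i\frac{\eta^3(3\tau)}{\theta(3z;3\tau)}=\frac{2\pi}{3s}\,e^{\frac{6\pi^2 z^2}{s}}\cdot\frac{\eta^3(w)}{\theta(v;w)}.\]
It then remains only to show that $\tfrac{\eta^3(w)}{\theta(v;w)}$ equals $\tfrac{-i}{2\sinh(2\pi^2 z/s)}$ up to the asserted errors.

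For the expansion step I would insert the product representations \eqref{eq:eta} and \eqref{eq:theta} at nome $\tilde q$. The decisive observation is to split off the $n=1$ factor $(1-\tilde\zeta^{-1})$ from the last product in $\theta$; combined with the prefactor $\tilde\zeta^{1/2}$ it produces $\tilde\zeta^{1/2}-\tilde\zeta^{-1/2}=2\sinh\!\left(\tfrac{2\pi^2 z}{s}\right)$, after which the $\tilde q^{1/8}$ and one copy of $\prod(1-\tilde q^n)$ cancel, leaving
\[\frac{\eta^3(w)}{\theta(v;w)}=\frac{-i}{2\sinh\!\left(\frac{2\pi^2 z}{s}\right)}\cdot\frac{\prod_{n\geq1}(1-\tilde q^{n})^{2}}{\prod_{n\geq1}(1-\tilde\zeta\tilde q^{n})\prod_{n\geq1}(1-\tilde\zeta^{-1}\tilde q^{n})}.\]
Expanding the remaining quotient of products as $1$ plus corrections, the numerator contributes $O(\tilde q)=O(e^{-\frac{4\pi^2}{3}\Re(1/s)})$, while the two denominator products contribute leading terms $\tilde\zeta\tilde q$ and $\tilde\zeta^{-1}\tilde q$ of sizes $e^{-\frac{4\pi^2}{3}\Re(1/s)(1-3z)}$ and $e^{-\frac{4\pi^2}{3}\Re(1/s)(1+3z)}$. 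Since $|z|<\tfrac13$ makes both $1\mp 3z$ positive, the numerator correction is dominated by one of these, and the two survivors are exactly the error terms in the statement.

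The hardest part will be making these $O$-estimates genuinely uniform rather than pointwise, so that the additive corrections above really exponentiate into a single factor $1+O(\cdots)+O(\cdots)$. For this I must control the products by their first terms, which requires $|\tilde\zeta\tilde q^n|=e^{-\frac{4\pi^2}{3}n\Re(1/s)+4\pi^2 z\Re(1/s)}$ and the analogous quantity with $\tilde\zeta^{-1}$ to be summable; this is exactly where $|z|<\tfrac13$ is used, since it forces $1\pm3z>0$ and hence geometric decay in $n$. The hypothesis $|x|\leq1$ enters through $s=\beta(1+ixm^{-1/3})$, which gives $\Re(\tfrac1s)=\tfrac1\beta(1+x^2m^{-2/3})^{-1}\asymp\beta^{-1}=\tfrac{\sqrt{6n}}{\pi}\to\infty$; this lower bound on $\Re(\tfrac1s)$ is what makes $\tilde q$ (and thus every correction) genuinely small as $n\to\infty$. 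Once these uniform bounds are established, taking logarithms of the products and re-exponentiating converts the leading corrections into the claimed multiplicative error, completing the proof.
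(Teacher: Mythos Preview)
Your proposal is correct and follows essentially the same route as the paper: both apply the inversion laws of \Cref{lem:transfo} to pass from $3\tau$ to $-\tfrac{1}{3\tau}=\tfrac{2\pi i}{3s}$, then expand the resulting $\eta^3/\theta$ quotient as an infinite product in the small nome $\tilde q=e^{-4\pi^2/(3s)}$, splitting off the $n=1$ factor $(1-\tilde\zeta^{-1})$ to produce the $\sinh$ and reading off the error from the first surviving terms of the remaining product. Your additional remarks on uniformity (why $|z|<\tfrac13$ guarantees geometric decay and why $|x|\le1$ forces $\Re(\tfrac1s)\asymp\beta^{-1}\to\infty$) are valid and make explicit what the paper leaves implicit.
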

\begin{proof}
By the transformation formulae from Lemma~\ref{lem:transfo},
{\allowdisplaybreaks
\begin{align*}
i\frac{\eta^3(3\tau)}{\theta(3z;3\tau)}&=i\frac{\left(\frac{1}{\sqrt{-3i\tau}}\right)^3\eta^3\left(-\frac{1}{3\tau}\right)}{\frac{i}{\sqrt{-3i\tau}}e^{-\pi i\frac{(3z)^2}{3\tau}}\theta\left(\frac z\tau;-\frac{1}{3\tau}\right)}\\
											  &=i\frac{\eta^3\left(-\frac{1}{3\tau}\right)}{3\tau e^{-3\pi i\frac{z^2}{\tau}}\theta\left(\frac z\tau ;-\frac{1}{3\tau}\right)}\\
											  &=\frac{2\pi\eta^3\left(\frac{2\pi i}{3s}\right) e^{\frac{6\pi^2 z^2}{s}}}{3s\theta\left(\frac{2\pi z}{is};\frac{2\pi i}{3s}\right)}\\
											  &=\frac{2\pi e^{ \frac{6\pi^2 z^2}{s}}e^{-\frac{\pi^2}{6s}}}{3is e^{\frac{2\pi^2 z}{s}}e^{-\frac{\pi^2}{6s}}}\prod\limits_{k=1}^\infty \frac{\left(1-e^{-\frac{4\pi^2 k}{3s}}\right)^2}{\left(1-e^{\frac{4\pi^2 z}{s}-\frac{4\pi^2 k}{3s}}\right)\left(1-e^{-\frac{4\pi^2 z}{s}-\frac{4\pi^2 (k-1)}{3s}}\right)}\\
											  &=\frac{2\pi e^{\frac{6\pi^2 z^2}{s}}}{3is e^{\frac{2\pi^2 z}{s}}\left(1-e^{-\frac{4\pi^2 z}{s}}\right)}\prod\limits_{k=1}^\infty \frac{\left(1-e^{-\frac{4\pi^2 k}{3s}}\right)^2}{\left(1-e^{\frac{4\pi^2 z}{s}-\frac{4\pi^2 k}{3s}}\right)\left(1-e^{-\frac{4\pi^2 z}{s}-\frac{4\pi^2 k}{3s}}\right)}\\
											   &=\frac{2\pi e^{\frac{6\pi^2 z^2}{s}}}{3is \left(e^{\frac{2\pi^2 z}{s}}-e^{-\frac{2\pi^2 z}{s}}\right)}\prod\limits_{k=1}^\infty \frac{\left(1-e^{-\frac{4\pi^2 k}{3s}}\right)^2}{\left(1-e^{\frac{4\pi^2 z}{s}-\frac{4\pi^2 k}{3s}}\right)\left(1-e^{-\frac{4\pi^2 z}{s}-\frac{4\pi^2 k}{3s}}\right)}\\
											   &=\frac{-i\pi e^{\frac{6\pi^2 z^2}{s}}}{3s \sinh\left(\frac{2\pi^2 z}{s}\right)}\left[1+O\left(e^{-\frac{4\pi^2}{3}\Re\left(\frac 1s\right)(1-3z)}\right)+O\left(e^{-\frac{4\pi^2}{3}\Re\left(\frac 1s\right)(1+3z)}\right)\right].
\end{align*}
}
\end{proof}

Before estimating the two last summands of~\eqref{eq:rankJacobi}, we need two more lemmas about $A_1$ and $h$.

\begin{lemma}
\label{lem:A_1}
Let $z \in \R$ with $|z|< \frac13$. Then for $|x| \leq 1$, we have as $n\rightarrow\infty$
$$A_1\left(\frac{2\pi z}{is},\mp\frac 13;\frac{2\pi i}{3s}\right)=\frac{-1}{2\sinh\left(\frac{2\pi^2z}{s}\right)} + O\left(e^{\frac{-2\pi^2}{3}\Re\left(\frac{1}{s}\right)(2-3z)}\right) + O\left(e^{\frac{-2\pi^2}{3}\Re\left(\frac{1}{s}\right)(2+3z)}\right).$$
\end{lemma}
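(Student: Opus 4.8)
The plan is to substitute the prescribed arguments directly into the defining series \eqref{eq:mu}, peel off the single summand that produces the claimed main term, and bound the remaining tail by a convergent (essentially geometric) series. With $\tau=\frac{2\pi i}{3s}$, $u=\frac{2\pi z}{is}$ and $v=\mp\frac13$ one computes $q=e^{-\frac{4\pi^2}{3s}}$, $e^{\pi i u}=e^{\frac{2\pi^2 z}{s}}$, $e^{2\pi i u}=e^{\frac{4\pi^2 z}{s}}$ and $e^{2\pi i v}=e^{\mp\frac{2\pi i}{3}}$, so that
\begin{equation*}
A_1\!\left(\tfrac{2\pi z}{is},\mp\tfrac13;\tfrac{2\pi i}{3s}\right)=e^{\frac{2\pi^2 z}{s}}\sum_{n\in\Z}\frac{(-1)^n e^{-\frac{2\pi^2(n^2+n)}{3s}}e^{\mp\frac{2\pi i n}{3}}}{1-e^{\frac{4\pi^2 z}{s}}e^{-\frac{4\pi^2 n}{3s}}}=:\sum_{n\in\Z}T_n .
\end{equation*}
The term $n=0$ is $T_0=\frac{e^{2\pi^2 z/s}}{1-e^{4\pi^2 z/s}}=\frac{-1}{2\sinh\left(\frac{2\pi^2 z}{s}\right)}$, which is exactly the asserted main term. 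Note that the phase $e^{\mp 2\pi i n/3}$ has modulus $1$, so it plays no role in any size estimate; this is why both choices of sign for $v$ give the same expansion.

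It then remains to show that $\sum_{n\neq 0}T_n$ is absorbed into the two error terms. First I would record that, for $|x|\le 1$, we have $\Re\!\left(\frac1s\right)=\bigl[\beta(1+x^2m^{-2/3})\bigr]^{-1}>0$ (and $\Re\!\left(\frac1s\right)\to\infty$ as $n\to\infty$), so that every $T_n$ can be controlled through its modulus alone. For $n\ge 1$ the hypothesis $|z|<\frac13$ forces the exponent $\frac{4\pi^2}{s}\!\left(z-\frac n3\right)$ to have negative real part, whence the denominator $1-e^{\frac{4\pi^2}{s}(z-n/3)}$ stays bounded away from $0$; taking moduli gives
\begin{equation*}
|T_n|\ll e^{2\pi^2 z\,\Re\left(\frac1s\right)}\,e^{-\frac{2\pi^2(n^2+n)}{3}\Re\left(\frac1s\right)},
\end{equation*}
and since $n^2+n$ is increasing the whole sum over $n\ge1$ is dominated by its first term $n=1$, namely $e^{-\frac{2\pi^2}{3}\Re\left(\frac1s\right)(2-3z)}$, the first error term. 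For $n\le -1$, writing $n=-k$ with $k\ge1$ gives $z+\frac k3>0$, so the denominator $1-e^{\frac{4\pi^2}{s}(z+k/3)}$ has modulus comparable to $e^{4\pi^2\Re\left(\frac1s\right)(z+k/3)}$; a short computation then yields $|T_{-k}|\ll e^{-2\pi^2 z\,\Re\left(\frac1s\right)}e^{-\frac{2\pi^2}{3}\Re\left(\frac1s\right)(k^2+k)}$, which is largest at $k=1$ and produces $e^{-\frac{2\pi^2}{3}\Re\left(\frac1s\right)(2+3z)}$, the second error term, with the remaining terms of strictly smaller order by the same monotonicity.

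The step I expect to require the most care is the \emph{uniform} control of the denominators $1-e^{2\pi i u}q^n$: checking that they are bounded away from $0$ for $n\ge 1$, and grow like a single exponential for $n\le -1$, uniformly in the admissible range of $z$, $x$, and $n$. This is precisely where the constraint $|z|<\frac13$ is essential, since it is exactly what guarantees $z-\frac n3<0$ for all $n\ge1$ in the first sum and $z+\frac n3>0$ for all $n\ge1$ in the second. Once these sign conditions are secured, the Gaussian factor $e^{-\frac{2\pi^2(n^2+n)}{3}\Re\left(\frac1s\right)}$ makes both tails converge and ensures that each is governed by its $|n|=1$ term, so that combining $T_0$ with the two tails yields the stated asymptotic formula.
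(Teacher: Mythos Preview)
Your proposal is correct and follows essentially the same strategy as the paper: isolate the $n=0$ summand as the main term and show that the tail is dominated by the $|n|=1$ contributions. The only cosmetic difference is that the paper first expands each denominator $\frac{1}{1-e^{2\pi i u}q^n}$ as a geometric series (splitting according to whether $|\zeta q^n|<1$ or $>1$) before reading off the $O(q)+O(\zeta^{-1}q)$ tails, whereas you bound the denominators directly; after substituting $\zeta=e^{4\pi^2 z/s}$ and $q=e^{-4\pi^2/(3s)}$ the two routes give exactly the same error terms.
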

\begin{proof}
In the proof of this lemma, we assume that $\zeta$ and $q$ are such that $|\zeta q^n|<1$ if $n>0$ and $|\zeta q^{n}|>1$ if $n<0$.
By applying the geometric series 
\[\frac{1}{1-x}=\begin{cases} \sum\limits_{k=0}^\infty x^k & \text{if }|x|<1, \\ -\sum\limits_{k=1}^\infty x^{-k} & \text{if }|x|>1\end{cases}\]
we find (writing $\rho=e^{\frac{2\pi i}{3}}$)
\begin{align*}
\zeta^{-\frac 12}A_1\left(z,\mp\tfrac 13;\tau\right)=\frac{1}{1-\zeta}&+\sum\limits_{n=1}^\infty (-1)^n\rho^{\mp n}q^{\frac{n^2+n}{2}}\sum\limits_{k=0}^\infty \zeta^kq^{nk} \\
&-\sum\limits_{n=1}^\infty (-1)^n\rho^{\pm n}q^{\frac{n^2-n}{2}}\sum\limits_{k=1}^\infty\zeta^{-k} q^{(-n)\cdot (-k)}.
\end{align*} 
If we see the above as a power series in $q$, we get that when $n \rightarrow \infty$,
$$\zeta^{-\frac 12}A_1\left(z,\mp\tfrac 13;\tau\right)=\frac{1}{1-\zeta} + O(q) + O\left(\zeta^{-1}q\right).$$
Thus
$$ A_1\left(z,\mp\tfrac 13;\tau\right)=\frac{-1}{\zeta^{\frac12}-\zeta^{-\frac12}} + O(\zeta^{\frac12}q) + O\left(\zeta^{-\frac12}q\right).$$

Plugging in $\zeta=e^{\frac{4\pi^2z}{s}}$ and $q=e^{-\frac{4\pi^2}{3s}}$ (which satisfy our condition that $|\zeta q^n|<1$ if $n>0$ and $|\zeta q^{n}|>1$ if $n<0$), we find:
\begin{align*}
A_1\left(\frac{2\pi z}{is},\mp\frac 13;\frac{2\pi i}{3s}\right)&=\frac{-1}{e^{\frac{2\pi^2z}{s}}-e^{\frac{-2\pi^2z}{s}}} + O\left(e^{2\pi^2z \Re\left(\frac{1}{s}\right)}e^{-\frac{4\pi^2}{3}\Re\left(\frac{1}{s}\right)}\right)\\
&\qquad\qquad\qquad\quad+ O\left(e^{-2\pi^2z \Re\left(\frac{1}{s}\right)}e^{-\frac{4\pi^2}{3}\Re\left(\frac{1}{s}\right)}\right)\\
&=\frac{-1}{2\sinh\left(\frac{2\pi^2z}{s}\right)} + O\left(e^{\frac{-2\pi^2}{3}\Re\left(\frac{1}{s}\right)(2-3z)}\right) + O\left(e^{\frac{-2\pi^2}{3}\Re\left(\frac{1}{s}\right)(2+3z)}\right).
\end{align*}
\end{proof}

We now turn to the Mordell integral.
\begin{lemma}
\label{lem:h}
For $|x| \leq 1$, we have as $n\rightarrow\infty$ that
$$\left| h\left(3z \pm \frac{is}{2\pi} ; \frac{3is}{2\pi}\right) \right| \ll e^{\frac{-\beta}{6}}.$$
\end{lemma}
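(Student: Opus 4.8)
The plan is to reduce the claim to a Mordell integral whose modulus has \emph{large} imaginary part, where the defining integral is easy to control, while extracting the factor $e^{-\beta/6}$ explicitly on the way. The starting point is the modular transformation law for the Mordell integral, Lemma~\ref{lem:htrafo}(ii), applied with $\tau=\tfrac{2\pi i}{3s}$ (note $\Im\tfrac{2\pi i}{3s}\asymp\tfrac1\beta\to\infty$ and $-\tfrac1\tau=\tfrac{3is}{2\pi}$). A direct computation of the prefactor yields
\[
h\!\left(3z\pm\tfrac{is}{2\pi};\tfrac{3is}{2\pi}\right)=\sqrt{\tfrac{2\pi}{3s}}\;e^{\frac{6\pi^2z^2}{s}\pm2\pi iz-\frac{s}{6}}\;h\!\left(\tfrac{2\pi iz}{s}\mp\tfrac13;\tfrac{2\pi i}{3s}\right),
\]
so that the desired $e^{-s/6}$ (of modulus $e^{-\beta/6}$) is already visible; it remains to see that the product of $\sqrt{2\pi/(3s)}$, the growing Gaussian factor $e^{6\pi^2z^2/s}$, and the transformed Mordell integral is $O(1)$.

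Next I would insert the definition of the transformed Mordell integral and complete the square in the integration variable. This produces precisely a factor $e^{-6\pi^2z^2/s}$, cancelling the growth above, together with a Gaussian centred (in the imaginary direction) at $w=-3iz$. Shifting the contour back to the real axis via $w\mapsto w-3iz$, the leftover phase $e^{\pm2\pi iz}$ cancels the $e^{\pm2\pi iz}$ from the prefactor, leaving the clean identity
\[
h\!\left(3z\pm\tfrac{is}{2\pi};\tfrac{3is}{2\pi}\right)=\sqrt{\tfrac{2\pi}{3s}}\;e^{-\frac{s}{6}}\int_{-\infty}^{\infty}\frac{e^{-\frac{2\pi^2}{3s}w^2\pm\frac{2\pi}{3}w}}{\cosh\pi(w-3iz)}\,dw.
\]

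Finally I would estimate the integral in absolute value. Since $|x|\le1$ we have $|s|\asymp\beta$ and $\Re(1/s)\asymp1/\beta$, so the Gaussian $e^{-\frac{2\pi^2}{3s}w^2}$ has real part with coefficient of order $1/\beta$ and hence $L^1$-mass of order $|s|^{1/2}$ (the harmless linear term $\pm\tfrac{2\pi}3w$ only shifts the centre by $O(\beta)$). Using $|\cosh\pi(w-3iz)|=\sqrt{\cosh^2(\pi w)-\sin^2(3\pi z)}$, which is bounded below by $|\cos(3\pi z)|$ and grows like $\cosh\pi w$, the integral is $O(|s|^{1/2})$; this cancels the factor $\sqrt{2\pi/(3s)}$ of size $|s|^{-1/2}$. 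Since $|e^{-s/6}|=e^{-\beta/6}$, the claimed bound follows.

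The main obstacle will be the exact cancellation of the individually divergent factors $\sqrt{2\pi/(3s)}\sim\beta^{-1/2}$ and $e^{6\pi^2z^2/s}$: one must carry the completion of the square and the contour shift through precisely, rather than bounding term by term (a crude pointwise bound on $|h|$ genuinely blows up once $\Re$ of the first argument leaves the strip between the poles of $\sech$). The shift $w\mapsto w-3iz$ is legitimate, picking up no residues and keeping $\cosh\pi(w-3iz)$ bounded away from $0$, exactly because $3z$ stays inside the strip $|\Re|<\tfrac12$ delimited by the poles $\pm\tfrac{i}{2}$ of $\sech$; this is where the smallness of $z$ (so that $|3z|<\tfrac12$, i.e.\ $\cos(3\pi z)\neq0$) enters decisively.
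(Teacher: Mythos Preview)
Your argument is correct and takes a genuinely different route from the paper. The paper's proof is a single line: it invokes Lemma~3.4 of \cite{BMR13} (with the specialisation $\ell=0$, $k=2$, $h=\mp1$, $u=0$, $z=\pi/(3s)$, $\alpha=3z$) to obtain $|h|\ll e^{-\frac{\pi}{18}\Re(3s/\pi)}=e^{-\beta/6}$. You instead give a self-contained proof: applying \Cref{lem:htrafo}(ii) with $\tau=\tfrac{2\pi i}{3s}$, completing the square, and shifting the contour by $-3iz$ to arrive at the clean representation
\[
h\!\left(3z\pm\tfrac{is}{2\pi};\tfrac{3is}{2\pi}\right)=\sqrt{\tfrac{2\pi}{3s}}\,e^{-s/6}\int_{\R}\frac{e^{-\frac{2\pi^2}{3s}w^2\pm\frac{2\pi}{3}w}}{\cosh\pi(w-3iz)}\,dw,
\]
after which the Gaussian has $L^1$-mass $O(|s|^{1/2})$ and cancels the prefactor. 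All of your computations check (the transformation prefactor, the cancellation of $e^{6\pi^2z^2/s}$ and of $e^{\pm 2\pi iz}$, the legitimacy of the contour shift inside $|3z|<\tfrac12$). Your approach has the virtue of being entirely internal to the paper's toolbox and of making the origin of the factor $e^{-\beta/6}$ transparent; the paper's citation is of course shorter.

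One point worth flagging: your final bound carries an implicit constant $1/|\cos(3\pi z)|$, so it holds for each fixed $z$ with $|3z|<\tfrac12$ but degenerates at the boundary $|z|=\tfrac16$. You note this explicitly, and since the lemma as stated puts no hypothesis on $z$, this is consistent with what is being claimed. It is, however, worth keeping in mind for the downstream application in \Cref{lem:asympA1} and \Cref{boundgLemma}, where one eventually integrates over $z\in[-\tfrac16,\tfrac16]$; there the loss of uniformity near the endpoints would need to be handled separately (e.g.\ by a partial contour shift or by absorbing a mild $z$-dependent factor into the much stronger decay coming from $\eta$).
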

\begin{proof}
We apply Lemma 3.4 of~\cite{BMR13} with $\ell = 0$, $k=2$, $h=\mp 1$, $u=0$, $z= \frac{\pi}{3s}$ and $\alpha=3z$.
This gives
$$\left| h\left(3z \pm \frac{is}{2\pi} ; \frac{3is}{2\pi}\right) \right| \ll e^{\frac{-\pi}{18}\Re\left(\frac{3s}{\pi}\right)}.$$
The result follows.
\end{proof}

With this, we can now prove the following estimate for the Appell-Lerch sums.
\begin{lemma}\label{lem:asympA1}
For $|z|\leq \frac16$ and $|x| \leq 1$, as $n\rightarrow\infty$
\begin{align*}
A_1(3z,\mp \tau; 3 \tau)=&\frac{i\pi}{3s}\frac{\zeta^{\pm 1} e^{\frac{6\pi^2z^2}{s}}}{\sinh\left(\frac{2\pi^2z}{s}\right)} +O\left(\frac{1}{|s|^{1/2}} e^{-\frac{\pi^2}{6}\Re\left(\frac1s\right)}\right).
\end{align*}
\end{lemma}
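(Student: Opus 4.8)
The plan is to apply the modular inversion law for Appell--Lerch sums, Lemma~\ref{lem:mutrafo}(ii), so as to reduce $A_1(3z,\mp\tau;3\tau)$ to the quantity already estimated in Lemma~\ref{lem:A_1}, and then to control the resulting Mordell-integral correction term.

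First I would apply the second identity of Lemma~\ref{lem:mutrafo}(ii) with $u=3z$, $v=\mp\tau$ and modular variable $3\tau$ in place of $\tau$. Solving for the sum we want gives
\[
A_1(3z,\mp\tau;3\tau)=\frac{1}{3\tau}\,e^{\frac{\pi i((3z)^2\pm 6z\tau)}{3\tau}}\,A_1\!\left(\frac{z}{\tau},\mp\frac13;-\frac1{3\tau}\right)+\frac{1}{2i}\,h\!\left(3z\pm\tau;3\tau\right)\theta(\mp\tau;3\tau).
\]
Using $\tau=\tfrac{is}{2\pi}$ one checks that $\tfrac{z}{\tau}=\tfrac{2\pi z}{is}$, that $-\tfrac1{3\tau}=\tfrac{2\pi i}{3s}$, that $\tfrac1{3\tau}=-\tfrac{2\pi i}{3s}$, and that the exponential factor collapses to $e^{\frac{6\pi^2z^2}{s}}\zeta^{\pm1}$ (the part $\pm 2\pi iz$ of the exponent producing $\zeta^{\pm1}$). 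Thus the transformed Appell--Lerch sum is precisely the one treated in Lemma~\ref{lem:A_1}, whose leading term is $\tfrac{-1}{2\sinh(2\pi^2z/s)}$. Multiplying by the prefactor $-\tfrac{2\pi i}{3s}e^{6\pi^2z^2/s}\zeta^{\pm1}$ and simplifying produces exactly the claimed main term $\tfrac{i\pi}{3s}\tfrac{\zeta^{\pm1}e^{6\pi^2z^2/s}}{\sinh(2\pi^2z/s)}$.

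It then remains to show both error contributions are $O(|s|^{-1/2}e^{-\frac{\pi^2}{6}\Re(1/s)})$, and here I would use that $|s|\asymp\beta$ for $|x|\le1$. The error inherited from Lemma~\ref{lem:A_1} is multiplied by the prefactor of size $\asymp|s|^{-1}e^{6\pi^2z^2\Re(1/s)}$, so each of its two pieces is bounded by $|s|^{-1}\exp\big(\Re(\tfrac1s)[\,6\pi^2z^2-\tfrac{2\pi^2}3(2\mp3z)\,]\big)$; an elementary estimate of this quadratic in $z$ over $|z|\le\tfrac16$ shows the bracket is maximized at an endpoint with value $-\tfrac{5\pi^2}{6}$, which is comfortably stronger than $-\tfrac{\pi^2}{6}$. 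For the Mordell term I would bound $h(3z\pm\tau;3\tau)=h(3z\pm\tfrac{is}{2\pi};\tfrac{3is}{2\pi})$ by Lemma~\ref{lem:h}, giving $\ll e^{-\beta/6}$, and estimate $\theta(\mp\tau;3\tau)$ by transforming it through the modular inversion of $\theta$ in Lemma~\ref{lem:transfo} to $\theta(\mp\tfrac13;\tfrac{2\pi i}{3s})$; the transformation factors contribute $\asymp|s|^{-1/2}e^{\beta/6}$, while the transformed theta, having nome $e^{-\frac{4\pi^2}{3s}}$, has leading factor $e^{-\frac{\pi^2}{6s}}$ and hence size $\ll e^{-\frac{\pi^2}{6}\Re(1/s)}$. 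Collecting these, the two factors $e^{\pm\beta/6}$ cancel and the Mordell term is $O(|s|^{-1/2}e^{-\frac{\pi^2}{6}\Re(1/s)})$.

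The step I expect to be the main obstacle is the bookkeeping of the theta factor: since $\theta(\mp\tau;3\tau)$ has nome of modulus close to $1$, one must first transform it to a small nome to read off its true size $\asymp|s|^{-1/2}e^{\beta/6}e^{-\frac{\pi^2}{6}\Re(1/s)}$, and it is exactly the cancellation of this $e^{\beta/6}$ against the $e^{-\beta/6}$ from Lemma~\ref{lem:h} that yields the clean error exponent $-\tfrac{\pi^2}{6}\Re(1/s)$. Keeping the branches of the square roots and the elliptic and modular normalizations mutually consistent throughout these transformations is the delicate part of the argument.
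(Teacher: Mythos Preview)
Your proposal is correct and follows essentially the same route as the paper: apply the $A_1$ inversion law to reduce to Lemma~\ref{lem:A_1}, extract the main term, and bound the Mordell correction $\tfrac{1}{2i}h\cdot\theta$ via Lemma~\ref{lem:h} together with a small-nome estimate for $\theta(\mp\tau;3\tau)$. The only cosmetic difference is that the paper simplifies $\theta(\mp\tau;3\tau)=\pm iq^{-1/6}\eta(\tau)$ first and then applies the $\eta$-inversion, whereas you apply the $\theta$-inversion directly; both yield the same factor $\asymp |s|^{-1/2}e^{\beta/6}e^{-\frac{\pi^2}{6}\Re(1/s)}$ and the same $e^{\pm\beta/6}$ cancellation you describe.
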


\begin{proof}
We use the transformation properties of $A_1$ to obtain 
\begin{align*}
A_1(3z,\mp\tau;3\tau)&=\frac{1}{2i}h(3z\pm \tau;3\tau)\theta(\mp\tau;3\tau)+\frac{1}{3\tau}e^{\frac{\pi i(3z^2\pm 2z\tau)}{\tau}}A_1\left(\frac z\tau,\mp\frac{1}{3};-\frac{1}{3\tau}\right)\\
               &=\frac{1}{2i}h\left(3z\pm\frac{is}{2\pi};\frac{3is}{2\pi}\right)\theta\left(\mp\frac{is}{2\pi};\frac{3is}{2\pi}\right)+\frac{2\pi}{3is}e^{\frac{2\pi^2\left(3z^2\pm\frac{2izs}{2\pi}\right)}{s}}A_1\left(\frac{2\pi z}{is},\mp\frac 13;\frac{2\pi i}{3s}\right)\\
               &=\pm\frac{1}{2}e^{\frac s6}h\left(3z\pm\frac{is}{2\pi};\frac{3is}{2\pi}\right)\eta\left(\frac {is}{2\pi}\right)-\frac{2\pi i}{3s}e^{\frac{6\pi^2z^2}{s}} \zeta^{\pm 1} A_1\left(\frac{2\pi z}{is},\mp\frac 13;\frac{2\pi i}{3s}\right),
\end{align*}
by \Cref{lem:htrafo} and \Cref{lem:mutrafo}. In the last equality we additionally used that
\[\theta(\mp \tau;3\tau)=\pm iq^{-\frac 16}\eta(\tau),\]
which is easily deduced from the definition of $\theta$ in \eqref{eq:theta}.
By Lemma~\ref{lem:h} and Lemma~\ref{lem:transfo}, we have
\begin{align*}
\left| \frac{1}{2}e^{\frac s6}h\left(3z\pm\frac{is}{2\pi};\frac{3is}{2\pi}\right)\eta\left(\frac {is}{2\pi}\right) \right| &\ll e^{\frac{\beta}{6}-\frac{\beta}{6}} \left| \eta \left(\frac{is}{2\pi}\right)\right| \ll \frac{1}{|s|^{\frac{1}{2}}} e^{\frac{-\pi^2}{6}\Re\left(\frac{1}{s}\right)}.
\end{align*}
By Lemma~\ref{lem:A_1},
\begin{align*}
-\frac{2\pi i}{3s}e^{\frac{6\pi^2z^2}{s}} \zeta^{\pm 1} A_1\left(\frac{2\pi z}{is},\mp\frac 13;\frac{2\pi i}{3s}\right) =& \frac{\pi i}{3s} \frac{e^{\frac{6\pi^2z^2}{s}} \zeta^{\pm 1}}{\sinh\left(\frac{2\pi^2z}{s}\right)} + O\left(e^{-\pi^2\Re\left(\frac{1}{s}\right) \left(\frac43 -2z -6z^2\right)} \right)\\
&+ O\left(e^{-\pi^2\Re\left(\frac{1}{s}\right) \left(\frac43 +2z -6z^2\right)} \right)
\end{align*}
For $|z| \leq \frac{1}{6}$, $\frac43 -2z -6z^2 > \frac{1}{6}$ and $\frac43 +2z -6z^2 > \frac{1}{6}$. Therefore
\begin{align*}
e^{-\pi^2\Re\left(\frac{1}{s}\right) \left(\frac43 +2z -6z^2\right)} &\ll \frac{1}{|s|^{\frac{1}{2}}} e^{\frac{-\pi^2}{6}\Re\left(\frac{1}{s}\right)},\\
e^{-\pi^2\Re\left(\frac{1}{s}\right) \left(\frac43 -2z -6z^2\right)} &\ll \frac{1}{|s|^{\frac{1}{2}}} e^{\frac{-\pi^2}{6}\Re\left(\frac{1}{s}\right)}.
\end{align*}
Thus the dominant error term is the one coming from $\pm \frac{1}{2}e^{\frac s6}h\left(3z\pm\frac{is}{2\pi};\frac{3is}{2\pi}\right)\eta\left(\frac {is}{2\pi}\right)$. The lemma follows.
\end{proof}

\section{Asymptotic behavior}\label{sec:Asympt}
Since $N(m,n)=N(-m,n)$ for all $m$ and $n$, we assume from now on that $m \geq 0$.
In this section we want to study the asymptotic behavior of the generating function of $N(m,n)$
Let us define
\[R_m(\tau):=\int_{-\frac12}^{\frac12} R(z;\tau)e^{-2\pi imz}dz.\]
Let us recall that $\tau=\frac{is}{2\pi}$ and $s=\beta\left(1+ixm^{-\frac{1}{3}}\right)$ with $x\in\R$ satisfying $|x| \leq \frac{\pi m^{\frac13}}{\beta}$.
To simplify the forthcoming calculations, we need the following lemma.
\begin{lemma}
\label{lem:intR}
It holds that
$$R_m(\tau)= 3\frac{q^{\frac{1}{24}}}{\eta(\tau)} \int_{-\frac16}^{\frac16} g_m(z;\tau) e^{-2\pi i mz} dz,$$
where
\begin{align*}
g_m(z;\tau):=\begin{cases} -A_1(3z,\tau;3\tau)e^{3\pi iz} + A_1(3z,-\tau;3\tau)e^{-3\pi iz} & \text{ for }m\equiv 0\pmod{3}, \\
                       -A_1(3z,-\tau;3\tau)e^{-\pi iz} -i \frac{\eta^3(3\tau)}{\theta(3z;3\tau)}e^{-\pi iz} &\text{ for }m\equiv 1\pmod{3}, \\
                        A_1(3z,\tau;3\tau)e^{\pi iz} +i \frac{\eta^3(3\tau)}{\theta(3z;3\tau)}e^{\pi iz} &\text{ for }m\equiv 2\pmod{3}. \\ \end{cases}
\end{align*}
\end{lemma}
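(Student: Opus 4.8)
The plan is to substitute the Jacobi-form decomposition \eqref{eq:rankJacobi} into the definition of $R_m(\tau)$ and then fold the integral over $[-\tfrac12,\tfrac12]$ down to $[-\tfrac16,\tfrac16]$ by exploiting an anti-periodicity of period $\tfrac13$. Abbreviating
\[\Phi(z):=\left(\zeta^{\frac12}-\zeta^{-\frac12}\right)\left[\,i\frac{\eta^3(3\tau)}{\theta(3z;3\tau)}-\zeta^{-1}A_1(3z,-\tau;3\tau)-\zeta\,A_1(3z,\tau;3\tau)\right],\]
we have $R_m(\tau)=\frac{q^{1/24}}{\eta(\tau)}\int_{-1/2}^{1/2}\Phi(z)e^{-2\pi imz}\,dz$, and $\Phi$ is holomorphic in $z\in\R$ because it equals $\frac{\eta(\tau)}{q^{1/24}}R(z;\tau)$ and $R$ is a convergent series. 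First I would split the integral as $\int_{-1/2}^{-1/6}+\int_{-1/6}^{1/6}+\int_{1/6}^{1/2}$ and, in the two outer pieces, substitute $z=w\mp\tfrac13$, mapping them onto $[-\tfrac16,\tfrac16]$ and producing factors $e^{\pm2\pi im/3}=\rho^{\pm m}$ (with $\rho=e^{2\pi i/3}$) from $e^{-2\pi imz}$. This yields
\[R_m(\tau)=\frac{q^{1/24}}{\eta(\tau)}\int_{-1/6}^{1/6}\Bigl[\Phi(w)+\rho^{m}\Phi\bigl(w-\tfrac13\bigr)+\rho^{-m}\Phi\bigl(w+\tfrac13\bigr)\Bigr]e^{-2\pi imw}\,dw.\]

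The key enabling facts are the behaviour of the three building blocks under $z\mapsto z+\tfrac13$. Directly from \eqref{eq:theta} one reads off $\theta(v+1;\tau)=-\theta(v;\tau)$, and directly from \eqref{eq:mu} one reads off $A_1(u+1,v;\tau)=-A_1(u,v;\tau)$ (in both cases only the prefactor $\zeta^{1/2}$, resp.\ $e^{\pi iu}$, changes sign). Since $3\bigl(z+\tfrac13\bigr)=3z+1$, each of $i\eta^3(3\tau)/\theta(3z;3\tau)$, $A_1(3z,-\tau;3\tau)$, $A_1(3z,\tau;3\tau)$ is multiplied by $-1$ under $z\mapsto z+\tfrac13$. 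The prefactors, by contrast, acquire sixth roots of unity: writing $\omega=e^{\pi i/3}$ one has $\zeta^{\pm1/2}\mapsto\omega^{\pm1}\zeta^{\pm1/2}$ and $\zeta^{\pm1}\mapsto\omega^{\pm2}\zeta^{\pm1}$ (and the conjugate statements under $z\mapsto z-\tfrac13$). Collecting these, $\Phi\bigl(w\pm\tfrac13\bigr)$ is expressed explicitly as a root-of-unity–weighted combination of the same three functions evaluated at $w$.

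It then remains to verify the pointwise identity
\[\Phi(w)+\rho^{m}\Phi\bigl(w-\tfrac13\bigr)+\rho^{-m}\Phi\bigl(w+\tfrac13\bigr)=3\,g_m(w;\tau),\]
after which the claimed formula is immediate. Substituting the expressions of the previous step, the left-hand side becomes a linear combination of $i\eta^3(3\tau)/\theta(3w;3\tau)$, $A_1(3w,-\tau;3\tau)$ and $A_1(3w,\tau;3\tau)$, whose coefficients are sums of three sixth roots of unity depending on $w$ only through $\zeta^{\pm1/2},\zeta^{\pm3/2}$ and on $m$ only through $m\bmod3$. A short computation shows, for instance, that the coefficient of $i\eta^3(3\tau)/\theta(3w;3\tau)$ equals
\[\bigl(1-2\cos\tfrac{\pi(2m-1)}{3}\bigr)\zeta^{\frac12}-\bigl(1-2\cos\tfrac{\pi(2m+1)}{3}\bigr)\zeta^{-\frac12},\]
which vanishes for $m\equiv0$, equals $-3\zeta^{-1/2}$ for $m\equiv1$, and equals $3\zeta^{1/2}$ for $m\equiv2$; the analogous coefficients of the two Appell--Lerch terms behave the same way (e.g.\ the $\zeta^{3/2}$-coefficient of $A_1(3w,\tau;3\tau)$ is $-(1+2\cos\tfrac{2\pi m}{3})$, which survives only for $m\equiv0$). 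Matching the surviving terms reproduces the three cases of $g_m$ exactly, including the cancellation of the theta-contribution for $m\equiv0$.

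The only real work is this last bookkeeping step: one must track the half-integer powers of $\zeta$ together with the sixth roots of unity $e^{\pi i/3}$ consistently through all three residue classes and both shifts, and check that each surviving coefficient is precisely $3$ times the corresponding one in $g_m$. There is no analytic obstacle, since $\Phi$ is holomorphic and the folding is therefore valid term by term, while the individual poles of the building blocks at $w=0$ cancel automatically in $g_m$; so the proof reduces entirely to this finite root-of-unity computation.
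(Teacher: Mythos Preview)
Your proposal is correct and follows essentially the same route as the paper: substitute \eqref{eq:rankJacobi}, split $[-\tfrac12,\tfrac12]$ into three subintervals of length $\tfrac13$, shift the outer ones back using the anti-periodicities $\theta(3z+1;3\tau)=-\theta(3z;3\tau)$ and $A_1(3z+1,\pm\tau;3\tau)=-A_1(3z,\pm\tau;3\tau)$, and then evaluate the resulting root-of-unity sums case by case on $m\bmod 3$. The only organisational difference is that you fold the full holomorphic combination $\Phi$ at once, whereas the paper folds each of the three summands $I_1,I_2,I_3$ separately and recombines at the end; your version is slightly cleaner in that it never writes down the individual pieces $\eta^3/\theta(3z;3\tau)$ or $A_1(3z,\pm\tau;3\tau)$ as integrands with a pole at $z=0$.
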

\begin{proof}
By~\eqref{eq:rankJacobi}, let us write
$$R_m(\tau) = \frac{q^{\frac{1}{24}}}{\eta(\tau)} (I_1 - I_2 -I_3),$$
where
\begin{align*}
I_1&:= \int_{-\frac12}^{\frac12} \frac{i\left(\zeta^{\frac 12}-\zeta^{-\frac 12}\right)\eta^3(3\tau)}{\theta(3z;3\tau)} e^{-2\pi imz}dz,\\
I_2&:= \int_{-\frac12}^{\frac12} \zeta^{-1}\left(\zeta^{\frac 12}-\zeta^{-\frac 12}\right)A_1(3z,-\tau;3\tau) e^{-2\pi imz}dz,\\
I_3&:= \int_{-\frac12}^{\frac12} \zeta\left(\zeta^{\frac 12}-\zeta^{-\frac 12}\right)A_1(3z,\tau;3\tau)) e^{-2\pi imz}dz.\\
\end{align*}
First, using~\eqref{eq:theta} and~\eqref{eq:mu}, let us notice that
\begin{align}
\theta(3z+1;3\tau)&=-\theta(3z;3\tau), \label{eq1}\\
A_1(3z+1,\tau;3\tau)&=-A_1(3z,\tau;3\tau), \label{eq2}\\
A_1(3z+1,-\tau;3\tau)&=-A_1(3z,-\tau;3\tau). \label{eq3}
\end{align}
Thus by~\eqref{eq1},
\begin{align*}
I_1 =& \left(\int_{-\frac{1}{2}}^{-\frac{1}{6}} + \int_{-\frac{1}{6}}^{\frac{1}{6}} + \int_{\frac{1}{6}}^{\frac{1}{2}} \right) \frac{i\left(\zeta^{\frac 12}-\zeta^{-\frac 12}\right)\eta^3(3\tau)}{\theta(3z;3\tau)} e^{-2\pi imz}dz\\
=& -i \int_{-\frac{1}{6}}^{\frac{1}{6}} \left(e^{\pi i (z -\frac13)}-e^{-\pi i (z -\frac13)}\right) \frac{\eta^3(3\tau)}{\theta(3z;3\tau)} e^{-2\pi im(z-\frac13)}dz\\
&+ i \int_{-\frac{1}{6}}^{\frac{1}{6}} \left(e^{\pi iz}-e^{-\pi iz}\right)\frac{\eta^3(3\tau)}{\theta(3z;3\tau)} e^{-2\pi imz}dz\\
&- i \int_{-\frac{1}{6}}^{\frac{1}{6}} \left(e^{\pi i (z +\frac13)}-e^{-\pi i (z +\frac13)}\right)\frac{\eta^3(3\tau)}{\theta(3z;3\tau)} e^{-2\pi im(z+\frac13)}dz\\
=& \int_{-\frac{1}{6}}^{\frac{1}{6}} \left[ e^{\pi i z}\left( -e^{\frac{\pi i}{3}(2m-1)} + 1 - e^{\frac{\pi i}{3}(-2m+1)} \right) -e^{-\pi i z}\left( -e^{\frac{\pi i}{3}(2m+1)} + 1 - e^{\frac{\pi i}{3}(-2m-1)} \right) \right]\\
&\times i \frac{\eta^3(3\tau)}{\theta(3z;3\tau)} e^{-2\pi imz} dz.
\end{align*}
Therefore
\begin{equation}
\label{I1}
\begin{aligned}
I_1=& \begin{cases} 0 & \text{ for }m\equiv 0\pmod{3}, \\
                -3i \int_{-\frac{1}{6}}^{\frac{1}{6}} \frac{\eta^3(3\tau)}{\theta(3z;3\tau)} e^{-\pi i z(2m+1)}dz &\text{ for }m\equiv 1\pmod{3}, \\
                3i \int_{-\frac{1}{6}}^{\frac{1}{6}} \frac{\eta^3(3\tau)}{\theta(3z;3\tau)} e^{-\pi i z(2m-1)}dz &\text{ for }m\equiv 2\pmod{3}. \\ \end{cases}
\end{aligned}
\end{equation}
By the same method and using~\eqref{eq2} and~\eqref{eq3}, we obtain
\begin{equation}
\label{I2}
\begin{aligned}
I_2=& \begin{cases} -3 \int_{-\frac{1}{6}}^{\frac{1}{6}} A_1(3z,-\tau;3\tau) e^{-\pi i z(2m+3)}dz & \text{ for }m\equiv 0\pmod{3}, \\
                3 \int_{-\frac{1}{6}}^{\frac{1}{6}} A_1(3z,-\tau;3\tau) e^{-\pi i z(2m+1)}dz &\text{ for }m\equiv 1\pmod{3}, \\
                0 &\text{ for }m\equiv 2\pmod{3}, \\ \end{cases}
\end{aligned}
\end{equation}
and
\begin{equation}
\label{I3}
\begin{aligned}
I_3=& \begin{cases} 3 \int_{-\frac{1}{6}}^{\frac{1}{6}} A_1(3z,\tau;3\tau) e^{-\pi i z(2m-3)}dz & \text{ for }m\equiv 0\pmod{3}, \\
                0 &\text{ for }m\equiv 1\pmod{3}, \\
                -3 \int_{-\frac{1}{6}}^{\frac{1}{6}} A_1(3z,\tau;3\tau) e^{-\pi i z(2m-1)}dz &\text{ for }m\equiv 2\pmod{3}. \\ \end{cases}
\end{aligned}
\end{equation}
The result follows.
\end{proof}

\subsection{Bounds near the dominant pole}
In this section we consider the range $|x| \leq 1$. We start by determining the main term of $g_m$.

\begin{lemma}
\label{boundgLemma}
For all $m \geq 0$ and $-\frac{1}{6} \leq z \leq \frac{1}{6}$, we have for $|x|\leq1$ as $n\rightarrow\infty$
$$
g_m \left( z; \frac{is}{2\pi}\right) = \frac{2\pi \sin(\pi z) e^{\frac{6\pi^2 z^2}{s}}}{3s \sinh\left(\frac{2\pi^2 z}{s}\right)}+O\left(\frac{1}{|s|^{\frac 12}} e^{-\frac{\pi^2}{6}\Re\left(\frac1s\right)}\right).
$$
\end{lemma}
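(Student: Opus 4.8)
The plan is to treat the three residue classes of $m$ modulo $3$ separately, since $g_m$ is defined by cases in \Cref{lem:intR}, and in each case to substitute the asymptotic expansions of its constituent pieces. Writing $P:=\frac{e^{6\pi^2z^2/s}}{\sinh\left(2\pi^2z/s\right)}$ and $R:=\Re\left(\frac1s\right)$, recall that \Cref{lem:asympA1} gives $A_1(3z,\mp\tau;3\tau)=\frac{i\pi}{3s}e^{\pm2\pi iz}P+O\left(|s|^{-1/2}e^{-\pi^2R/6}\right)$, while \Cref{lem:asympt} gives $i\eta^3(3\tau)/\theta(3z;3\tau)=-\frac{i\pi}{3s}P\left[1+O\left(e^{-\frac{4\pi^2}{3}R(1-3z)}\right)+O\left(e^{-\frac{4\pi^2}{3}R(1+3z)}\right)\right]$. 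First I would check that in each of the three cases the leading terms assemble into the claimed main term. Because $z$ is real, the phases $e^{\pm\pi iz}$ and $e^{\pm3\pi iz}$ are unimodular, and in every case the two leading contributions enter in the combination $\frac{i\pi}{3s}P\left(e^{-\pi iz}-e^{\pi iz}\right)=\frac{2\pi\sin(\pi z)}{3s}P$, which is exactly $\frac{2\pi\sin(\pi z)e^{6\pi^2z^2/s}}{3s\sinh\left(2\pi^2z/s\right)}$. Note that both $A_1$ and $i\eta^3/\theta$ carry a simple pole at $z=0$ coming from $1/\sinh\left(2\pi^2z/s\right)$, but these cancel in the combination (the surviving factor $\sin(\pi z)$ kills the pole), so the main term is regular at the origin.

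For the error the case $m\equiv0\pmod3$ is cleanest: there $g_m$ involves only the two Appell--Lerch sums, whose errors are already $O\left(|s|^{-1/2}e^{-\pi^2R/6}\right)$, and multiplication by the unimodular $e^{\pm3\pi iz}$ preserves this. The cases $m\equiv1,2\pmod3$ additionally produce the error from \Cref{lem:asympt}, namely $\frac{\pi}{3s}P\cdot O\left(e^{-\frac{4\pi^2}{3}R(1\mp3z)}\right)$. To bound this I would use $|\sinh(w)|\ge|\sinh(\Re w)|$ with $w=2\pi^2z/s$ together with $R|s|\asymp1$ (indeed $R|s|=\Re(s)/|s|\to1$) to control $|P|$, and then combine exponents: for $z$ bounded away from $0$ the total real exponent is at most $-\frac{2\pi^2}{3}R$, so this error is $\ll\frac1{|s|}e^{-\frac{2\pi^2}{3}R}$, which is dominated by $\frac1{|s|^{1/2}}e^{-\frac{\pi^2}{6}R}$ since the extra decay $e^{-\frac{\pi^2}{2}R}$ swamps the polynomial factor $|s|^{-1/2}$ (recall $R\asymp1/\beta\to\infty$). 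The range $-\frac16\le z<0$ is symmetric.

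The hard part will be the behaviour near $z=0$. There the factor $1/\sinh\left(2\pi^2z/s\right)$ in the \Cref{lem:asympt}-error is of size $\asymp1/|z|$, so the naive term-by-term bound degrades and fails to meet the uniform estimate in a super-exponentially small neighbourhood of the origin, even though each individual piece remains small. The way around this is the same pole cancellation already exploited for the main term: the relative-error bracket in \Cref{lem:asympt} multiplies a quantity whose simple pole is matched by that of the $A_1$-term, so once the two contributions are combined the genuine error of $g_m$ is regular at $z=0$. Concretely, I would split $[-\frac16,\frac16]$ into an outer part $z_0\le|z|\le\frac16$, on which the exponent bound above already suffices, and an inner part $|z|\le z_0$, on which I would estimate $i\eta^3/\theta$ and $A_1$ against their common polar part directly from the product and series expansions used in the proofs of \Cref{lem:asympt} and \Cref{lem:asympA1}, so that their difference stays within $O\left(|s|^{-1/2}e^{-\pi^2R/6}\right)$. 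Assembling the three residue classes then gives the stated formula.
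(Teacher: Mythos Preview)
Your approach matches the paper's: treat the three residue classes of $m\bmod 3$ separately and substitute the asymptotics from \Cref{lem:asympt} and \Cref{lem:asympA1}, whereupon the leading contributions combine in every case into $\frac{i\pi}{3s}P\bigl(e^{-\pi iz}-e^{\pi iz}\bigr)=\frac{2\pi\sin(\pi z)}{3s}P$. Where you go beyond the paper is in worrying about uniformity at $z=0$: the paper simply absorbs the relative error from \Cref{lem:asympt} into the stated $O$-term without comment, whereas you correctly observe that the factor $1/\sinh(2\pi^2z/s)$ makes that contribution behave like $|z|^{-1}e^{-\frac{4\pi^2}{3}R}$ near the origin, so a term-by-term bound is not literally uniform there. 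Your proposed fix---use the exponent count on $|z|\ge z_0$ and compare $A_1$ and $i\eta^3/\theta$ directly against their common polar part on a tiny neighbourhood of $0$---is more care than the paper takes and is the right way to make the estimate genuinely uniform in $z$, which is what the integral bound in \Cref{mainGm2} actually requires.
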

\begin{proof}
If $m\equiv 0\pmod{3}$, we have by Lemma~\ref{lem:asympA1}
\begin{align*}
g_m(z;\tau)&= -A_1(3z,\tau;3\tau)e^{3\pi iz} + A_1(3z,-\tau;3\tau)e^{-3\pi iz}\\
&= -\frac{i\pi}{3s}\frac{e^{\pi iz} e^{\frac{6\pi^2z^2}{s}}}{\sinh\left(\frac{2\pi^2z}{s}\right)} + \frac{i\pi}{3s}\frac{e^{-\pi iz} e^{\frac{6\pi^2z^2}{s}}}{\sinh\left(\frac{2\pi^2z}{s}\right)}  +O\left(\frac{1}{|s|^{\frac 12}} e^{-\frac{\pi^2}{6}\Re\left(\frac1s\right)}\right)\\
&= \frac{i\pi}{3s}\frac{e^{\frac{6\pi^2z^2}{s}}}{\sinh\left(\frac{2\pi^2z}{s}\right)} \left(-e^{\pi i z} +e^{-\pi i z}\right) +O\left(\frac{1}{|s|^{\frac 12}} e^{-\frac{\pi^2}{6}\Re\left(\frac1s\right)}\right)\\
&= \frac{2\pi \sin(\pi z) e^{\frac{6\pi^2 z^2}{s}}}{3s \sinh\left(\frac{2\pi^2 z}{s}\right)}+O\left(\frac{1}{|s|^{\frac 12}} e^{-\frac{\pi^2}{6}\Re\left(\frac1s\right)}\right).
\end{align*}

If $m\equiv 1\pmod{3}$, we have by Lemma~\ref{lem:asympt} and Lemma~\ref{lem:asympA1}
\begin{align*}
g_m(z;\tau)&= -A_1(3z,-\tau;3\tau)e^{-\pi iz} -i \frac{\eta^3(3\tau)}{\theta(3z;3\tau)}e^{-\pi iz}\\
&= -\frac{i\pi}{3s}\frac{e^{\pi iz} e^{\frac{6\pi^2z^2}{s}}}{\sinh\left(\frac{2\pi^2z}{s}\right)} +O\left(\frac{1}{|s|^{1/2}} e^{-\frac{\pi^2}{6}\Re\left(\frac1s\right)}\right)\\
&+\frac{i\pi e^{-\pi i z} e^{\frac{6\pi^2 z^2}{s}}}{3s \sinh\left(\frac{2\pi^2 z}{s}\right)}\left[1+O\left(e^{-\frac{4\pi^2}{3}\Re\left(\frac 1s\right)(1-3z)}\right)+O\left(e^{-\frac{4\pi^2}{3}\Re\left(\frac 1s\right)(1+3z)}\right)\right] \\
&= \frac{i\pi}{3s}\frac{e^{\frac{6\pi^2z^2}{s}}}{\sinh\left(\frac{2\pi^2z}{s}\right)} \left(-e^{\pi i z} +e^{-\pi i z}\right) +O\left(\frac{1}{|s|^{1/2}} e^{-\frac{\pi^2}{6}\Re\left(\frac1s\right)}\right)\\
&= \frac{2\pi \sin(\pi z) e^{\frac{6\pi^2 z^2}{s}}}{3s \sinh\left(\frac{2\pi^2 z}{s}\right)}+O\left(\frac{1}{|s|^{1/2}} e^{-\frac{\pi^2}{6}\Re\left(\frac1s\right)}\right).
\end{align*}

Finally, if $m\equiv 2\pmod{3}$, we have by Lemma~\ref{lem:asympt} and Lemma~\ref{lem:asympA1}
\begin{align*}
g_m(z;\tau)&= A_1(3z,\tau;3\tau)e^{\pi iz} +i \frac{\eta^3(3\tau)}{\theta(3z;3\tau)}e^{\pi iz}\\
&= \frac{i\pi}{3s}\frac{e^{-\pi iz} e^{\frac{6\pi^2z^2}{s}}}{\sinh\left(\frac{2\pi^2z}{s}\right)} +O\left(\frac{1}{|s|^{1/2}} e^{-\frac{\pi^2}{6}\Re\left(\frac1s\right)}\right)\\
&-\frac{i\pi e^{\pi i z} e^{\frac{6\pi^2 z^2}{s}}}{3s \sinh\left(\frac{2\pi^2 z}{s}\right)}\left[1+O\left(e^{-\frac{4\pi^2}{3}\Re\left(\frac 1s\right)(1-3z)}\right)+O\left(e^{-\frac{4\pi^2}{3}\Re\left(\frac 1s\right)(1+3z)}\right)\right] \\
&= \frac{i\pi}{3s}\frac{e^{\frac{6\pi^2z^2}{s}}}{\sinh\left(\frac{2\pi^2z}{s}\right)} \left(e^{-\pi i z} -e^{\pi i z}\right) +O\left(\frac{1}{|s|^{1/2}} e^{-\frac{\pi^2}{6}\Re\left(\frac1s\right)}\right)\\
&= \frac{2\pi \sin(\pi z) e^{\frac{6\pi^2 z^2}{s}}}{3s \sinh\left(\frac{2\pi^2 z}{s}\right)}+O\left(\frac{1}{|s|^{1/2}} e^{-\frac{\pi^2}{6}\Re\left(\frac1s\right)}\right).
\end{align*}

\end{proof}

In view of Lemma \ref{boundgLemma} it is natural to define
\begin{align*}
\mathcal{G}_{m,1} (s) &:=  \frac{2\pi}{s} \int_{-\frac16}^{\frac16} \frac{\sin(\pi z) e^{\frac{6\pi^2 z^2}{s}}}{\sinh\left(\frac{2\pi^2 z}{s}\right)} e^{-2\pi i mz} dz,\\
\mathcal{G}_{m,2} (s) &:= 3 \int_{-\frac16}^{\frac16} \left( g_m\left(z;\frac{is}{2\pi}\right) -\frac{2\pi \sin(\pi z) e^{\frac{6\pi^2 z^2}{s}}}{3s \sinh\left(\frac{2\pi^2 z}{s}\right)}\right) e^{-2\pi i mz} dz.
\end{align*}

Thus
\begin{equation}\label{Rm}
R_m \left( \tau\right) = \frac{q^{\frac{1}{24}}}{\eta(\tau)} \left( \mathcal{G}_{m,1} (s) + \mathcal{G}_{m,2} (s) \right).
\end{equation}

Let us note that we can rewrite $\mathcal{G}_{m,1} (s)$ as
$$\mathcal{G}_{m,1} (s) =  \frac{4\pi}{s} \int_{0}^{\frac16} \frac{\sin(\pi z) e^{\frac{6\pi^2 z^2}{s}}}{\sinh\left(\frac{2\pi^2 z}{s}\right)} \cos(2\pi mz) dz.$$
 
\begin{lemma}
\label{mainGm1}
Assume that $|x|\leq 1$ and $m\leq\frac{1}{6\beta}\log n$. Then we have as $n\rightarrow \infty$
\[\calG_{m,1}(s)=\frac{s}{4}\sech^2\left(\frac{\beta m}{2}\right)+O\left(\beta^2m^{\frac23}\sech^2\left(\frac{\beta m}{2}\right)\right).\]
\end{lemma}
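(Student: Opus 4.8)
The plan is to exploit that for $|x|\le 1$ one has $\Re(1/s)\asymp 1/\beta\to\infty$, so that the factor $1/\sinh(2\pi^2 z/s)$ concentrates the mass of the integral near $z=0$. Using the elementary bound $|\sinh(w)|\ge\sinh(\Re w)$ one gets $|\sinh(2\pi^2z/s)|\ge\sinh\!\left(2\pi^2 z\,\Re(1/s)\right)$, which decays like $e^{-\pi^2 z/\beta}$ for $z$ bounded away from $0$. Accordingly I would first linearize the slowly varying factors, replacing $\sin(\pi z)$ by $\pi z$ and $e^{6\pi^2 z^2/s}$ by $1$, and then extend the range of integration from $[0,\tfrac16]$ to $[0,\infty)$. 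This reduces $\calG_{m,1}(s)$ to the explicitly computable integral $\frac{4\pi^2}{s}\int_0^\infty\frac{z\cos(2\pi mz)}{\sinh(2\pi^2z/s)}\,dz$.

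To control the three successive approximations I would bound each difference crudely in absolute value, crucially keeping the full quotient $e^{6\pi^2z^2/s}/\sinh(2\pi^2z/s)$ together: it is this combination, and not the numerator alone, that decays on $[0,\tfrac16]$, since $6\pi^2z^2-2\pi^2z=2\pi^2 z(3z-1)<0$ there. Using $|\sin(\pi z)-\pi z|\ll z^3$, the inequality $|e^{w}-1|\le|w|e^{\Re w}$, and (after the substitution $a=2\pi^2 z\,\Re(1/s)$) the estimate $e^{3a^2/(2\pi^2\Re(1/s))}/\sinh a\ll e^{-a/2}/(1-e^{-2a})$ valid on the relevant range $a\le\tfrac{\pi^2}3\Re(1/s)$, the three error integrals are seen to be $O(\beta^3)$, $O(\beta^2)$, and $O(e^{-c/\beta})$ respectively. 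The linearization of the Gaussian is the dominant one, so the total approximation error is $O(\beta^2)$.

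For the reduced integral I would insert the geometric series $1/\sinh(w)=2\sum_{k\ge0}e^{-(2k+1)w}$, legitimate since $\Re(2\pi^2z/s)>0$ for $z>0$, integrate term by term (justified by absolute convergence via Fubini) using $\int_0^\infty z e^{-cz}\cos(bz)\,dz=(c^2-b^2)/(c^2+b^2)^2$ for $\Re c>0$, and resum the resulting series by means of the identity $\sum_{k\ge0}\frac1{(2k+1)^2+\alpha^2}=\frac{\pi}{4\alpha}\tanh\!\left(\frac{\pi\alpha}{2}\right)$ together with its derivative in $\alpha^2$, evaluated at $\alpha=ms/\pi$ (which stays near the positive real axis, away from the poles $\alpha=i(2k+1)$). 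This yields $\frac{4\pi^2}{s}\int_0^\infty\frac{z\cos(2\pi mz)}{\sinh(2\pi^2z/s)}\,dz=\frac s4\sech^2\!\left(\frac{ms}{2}\right)$, so that $\calG_{m,1}(s)=\frac s4\sech^2\!\left(\frac{ms}{2}\right)+O(\beta^2)$.

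It remains to replace the argument $ms/2$ by $\beta m/2$, and this is where the main obstacle lies. Since $s=\beta(1+ixm^{-1/3})$ we have $\frac{ms}{2}=\frac{\beta m}{2}+\frac{i\beta x m^{2/3}}{2}$, a purely imaginary shift of size $\le\frac{\beta m^{2/3}}2$, and a first-order Taylor expansion (using $\frac{d}{d\xi}\sech^2\xi=-2\sech^2\xi\tanh\xi$) gives $\sech^2\!\left(\frac{ms}{2}\right)-\sech^2\!\left(\frac{\beta m}{2}\right)=O\!\left(\beta m^{2/3}\sech^2\!\left(\frac{\beta m}{2}\right)\right)$; multiplying by $|s|\asymp\beta$ produces exactly the claimed error. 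The delicate feature is that the main term $\sech^2(\beta m/2)$ may be exponentially small (of size $n^{-1/6}$ when $\beta m$ reaches $\frac16\log n$), so the error must be measured against it rather than bounded absolutely. This is secured by the identity $|\cosh(\frac{\beta m}{2}+it)|^2=\cosh^2(\frac{\beta m}{2})-\sin^2 t$: as $t\le\frac{\beta m^{2/3}}2\to0$ uniformly on the range, one obtains $|\sech^2\xi|\le 2\sech^2(\frac{\beta m}{2})$ along the segment joining $\frac{\beta m}{2}$ and $\frac{ms}{2}$, which controls the Taylor remainder. Finally, the hypothesis $m\le\frac{1}{6\beta}\log n$ ensures that $m^{2/3}\sech^2(\frac{\beta m}{2})$ is bounded below by a positive constant throughout (its minimum over $m\ge1$ being attained near $m=1$, where it tends to $1$), so the earlier $O(\beta^2)$ is absorbed into $O\!\left(\beta^2 m^{2/3}\sech^2(\frac{\beta m}{2})\right)$, completing the estimate; the degenerate value $m=0$ is checked directly.
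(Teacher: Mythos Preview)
Your argument is correct and reaches the same conclusion, but it follows a genuinely different route from the paper's proof.

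The paper Taylor-expands all three slowly varying factors $\sin(\pi z)$, $e^{6\pi^2 z^2/s}$, and $\cos(2\pi mz)$ simultaneously, producing a triple sum whose coefficients involve the moments $\mathcal{I}_\ell=\int_0^{1/6}\frac{z^{2\ell+1}}{\sinh(2\pi^2 z/s)}\,dz$. These are extended to $[0,\infty)$ and evaluated via the Euler--polynomial identity of \Cref{EulerLemma}; the triple sum is then resummed using \Cref{2.2} to recover $\tfrac{s}{4}\sech^2(ms/2)$, with error $O(|s|^2\cosh(ms))$. The passage from $ms/2$ to $\beta m/2$ is deferred to \cite[Lemma~3.2]{BD13}.

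You instead linearise only to leading order, reducing to the single integral $\frac{4\pi^2}{s}\int_0^\infty\frac{z\cos(2\pi mz)}{\sinh(2\pi^2 z/s)}\,dz$, which you evaluate in closed form by the geometric expansion of $1/\sinh$ together with the partial-fraction identity for $\tanh$ and its derivative. This bypasses the Euler--polynomial machinery entirely and, because you bound $|\cos(2\pi mz)|\le 1$ in each error term, yields the intermediate error $O(\beta^2)$ without the $\cosh(ms)$ factor the paper carries. You then supply the details of the replacement $ms/2\mapsto\beta m/2$ explicitly, including the crucial observation that on the vertical segment $|\sech^2\xi|\le 2\sech^2(\beta m/2)$ since $\beta m^{2/3}\to 0$, and the absorption of $O(\beta^2)$ using that $m^{2/3}\sech^2(\beta m/2)$ is bounded below on $1\le m\le\frac{1}{6\beta}\log n$.

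In short: the paper's approach is more systematic and would generalise readily to higher-order expansions, while yours is more direct, avoids the auxiliary lemmas on Euler polynomials, and gives a slightly sharper intermediate error. Both are valid; your treatment of the final replacement step is in fact more self-contained than the paper's, which outsources it to \cite{BD13}. The caveat about $m=0$ is well taken and reflects a looseness already present in the statement itself.
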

\begin{proof}
We use the same method as in~\cite{BD13}. Inserting the Taylor expansion of $\sin(\pi z)$, $\exp\left(\tfrac{6\pi^2z^2}{s}\right)$, and $\cos(2\pi mz)$ in the definition of $\calG_{m,1}(s)$, we find that
$$
\sin (\pi z) e^{\frac{6\pi^2 z^2}{s}} \cos (2\pi mz) = \sum_{j,\nu,r\geq 0} \frac{(-1)^{j+\nu}}{(2j+1)! (2\nu)! r!} \pi^{2j+1} (2 \pi m )^{2\nu} \left( \frac{6\pi^2}{s}\right)^r z^{2j+2\nu+2r+1}.
$$
This yields that
$$
\mathcal{G}_{m,1} (s) = \frac{4\pi}{s} \sum_{j,\nu,r\geq 0} \frac{(-1)^{j+\nu}}{(2j+1)! (2\nu)! r!} \pi^{2j+1} (2 \pi m )^{2\nu} \left( \frac{6\pi^2}{s}\right)^r \mathcal{I}_{j+\nu+r},
$$
where for $\ell\in\N_0$ we define
$$
\mathcal{I}_\ell := \int_0^{\frac16} \frac{z^{2\ell+1}}{\sinh \left( \frac{2\pi^2z}{s} \right)} dz.
$$
We next relate $\mathcal{I}_\ell$ to $\mathcal{E}_\ell$ defined in (\ref{Eulerintegral}). For this, we note that
\begin{equation}\label{splitI}
\mathcal{I}_\ell = \int_0^{\infty} \frac{z^{2\ell+1}}{\sinh \left( \frac{2\pi^2z}{s} \right)} dz - \mathcal{I}_\ell'
\end{equation}
with 
\begin{multline*}
\mathcal{I}_\ell' := \int_{\frac16}^\infty \frac{z^{2\ell+1}}{\sinh \left( \frac{2\pi^2z}{s} \right)} dz \ll \int_{\frac16}^\infty z^{2\ell+1} e^{-2\pi^2 z \Re\left(\frac1s\right)} dz \\
 \ll \left(\Re\left(\frac{1}{s}\right)\right)^{-2\ell-2} \Gamma \left( 2\ell+2; \frac{\pi^2}{3} \text{Re}\left(\frac{1}{s}\right) \right),
\end{multline*}
where $\Gamma (\alpha ; x) := \int_x^\infty e^{-w}w^{\alpha-1} dw$. Using that as $x\rightarrow \infty$
\begin{equation}\label{incompletebound}
\Gamma \left( \ell; x\right) \sim x^{\ell -1} e^{-x}
\end{equation}
thus yields that
$$
\mathcal{I}_\ell' \ll\left(\Re\left(\frac1{s}\right)\right)^{-1} e^{-\frac{\pi^2}{3} \Re\left(\frac{1}{s}\right)}\leq e^{-\frac{\pi^2}{3} \Re\left(\frac1{s}\right)}.
$$
By a substitution in \Cref{EulerLemma}, we know that
$$
\int_0^\infty \frac{z^{2\ell+1}}{\sinh \left( \frac{2\pi^2z}{s}\right)} dz =\left( \frac{s}{2\pi} \right)^{2\ell+2} \frac{(-1)^{\ell+1} E_{2\ell+1} (0)}{2}.
$$
Thus
\begin{multline*}
\mathcal{G}_{m,1} (s) =  \sum_{j,\nu,r\geq 0} \frac{(-1)^{r+1}3^r}{2^{2j+r+1} (2j+1)! (2\nu)! r!} m^{2\nu} s^{2j+2\nu+r+1} \\\times \left( E_{2j+2\nu+2r+1} (0) + O \left( |z|^{-2j-2\nu -2r-2} e^{-\frac{\pi^2}{3} \Re\left(\frac{1}{s}\right)}\right)\right) \\
= \sum_{\nu= 0}^\infty \frac{(ms)^{2\nu}}{(2\nu)!} \left( - \frac{s}{2} E_{2\nu+1} (0) + O \left( |s|^2 \right) \right) = \frac{s}{4} \text{sech}^2 \left( \frac{ms}{2} \right) + O \left( |s|^2 \cosh (ms)\right),
\end{multline*}
where for the last equality we used Lemma \ref{2.2}.
The end of the proof is now exactly the same as in Lemma 3.2 of~\cite{BD13}.
\end{proof}

We now want to bound $\mathcal{G}_{m,2} (s)$.

\begin{lemma}
\label{mainGm2}
Assume that $|x|\leq 1$. Then we have as $n \rightarrow \infty$
$$\left| \mathcal{G}_{m,2} (s)\right| \ll \frac{1}{\beta^{\frac{1}{2}}} e^{-\frac{\pi^2}{12 \beta}}.$$
\end{lemma}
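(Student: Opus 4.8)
The plan is to exploit the fact that the integrand defining $\calG_{m,2}(s)$ is, by construction, precisely the error term already estimated in \Cref{boundgLemma}. That lemma asserts, uniformly for $-\tfrac16\le z\le\tfrac16$ and $|x|\le1$, that
\[
g_m\left(z;\frac{is}{2\pi}\right)-\frac{2\pi\sin(\pi z)e^{\frac{6\pi^2z^2}{s}}}{3s\sinh\left(\frac{2\pi^2z}{s}\right)}=O\left(\frac{1}{|s|^{\frac12}}e^{-\frac{\pi^2}{6}\Re\left(\frac1s\right)}\right),
\]
with an implied constant independent of $z$. The first step is therefore simply to move the absolute value inside the integral. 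Since $\left|e^{-2\pi imz}\right|=1$ and the interval $\left[-\tfrac16,\tfrac16\right]$ has length $\tfrac13$, the triangle inequality bounds $\left|\calG_{m,2}(s)\right|$ by a constant times the supremum of the integrand over this compact interval, giving at once
\[
\left|\calG_{m,2}(s)\right|\ll\frac{1}{|s|^{\frac12}}e^{-\frac{\pi^2}{6}\Re\left(\frac1s\right)}.
\]

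It then remains to rewrite this bound, which is phrased in terms of $|s|$ and $\Re(1/s)$, in terms of the real parameter $\beta$. Using $s=\beta\left(1+ixm^{-\frac13}\right)$, a direct computation gives $|s|=\beta\sqrt{1+x^2m^{-2/3}}$ and $\Re\left(\frac1s\right)=\frac1\beta\cdot\frac{1}{1+x^2m^{-2/3}}$. For $|x|\le1$ and $m\ge1$ one has $x^2m^{-2/3}\le1$, hence $\tfrac12\le\frac{1}{1+x^2m^{-2/3}}\le1$, which yields the two needed inequalities $\Re\left(\frac1s\right)\ge\frac{1}{2\beta}$ and $|s|\asymp\beta$. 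Substituting $\frac{1}{|s|^{1/2}}\ll\frac{1}{\beta^{1/2}}$ and $e^{-\frac{\pi^2}{6}\Re(1/s)}\le e^{-\frac{\pi^2}{12\beta}}$ into the previous display produces exactly the claimed estimate. The degenerate case $m=0$, where $s=\beta$ is real, is even simpler: there $\Re(1/s)=1/\beta$ and $|s|=\beta$, so the bound holds a fortiori.

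I expect no genuine obstacle here, since essentially all of the analytic work is already contained in \Cref{boundgLemma}; this lemma is merely its integrated form. The one point requiring a moment's attention is the lower bound $\Re(1/s)\ge\frac{1}{2\beta}$: it rests on the hypothesis $|x|\le1$, which keeps the argument of $s$ bounded away from $\pm\tfrac\pi2$ so that $1/s$ has a substantial real part, and the factor $\tfrac12$ appearing in the exponent $-\frac{\pi^2}{12\beta}$ of the final bound is precisely the price paid for allowing $\Re(1/s)$ to drop to half of $1/\beta$ in the worst case $x^2m^{-2/3}=1$.
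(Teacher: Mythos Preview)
Your proposal is correct and follows essentially the same route as the paper's own proof: bound the integrand by the uniform error from \Cref{boundgLemma}, integrate trivially over the bounded interval, and then convert the resulting bound in $|s|$ and $\Re(1/s)$ into one in $\beta$ via $|s|\ge\beta$ and $\Re(1/s)\ge\frac{1}{2\beta}$ for $|x|\le1$. Your write-up is somewhat more explicit about the computation of $\Re(1/s)$ and the case $m=0$, but the argument is the same.
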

\begin{proof}
By Lemma~\ref{boundgLemma}, we have
\begin{align*}
\left| \mathcal{G}_{m,2} (s)\right| &\ll \int_{-\frac{1}{6}}^{\frac{1}{6}} \left| \frac{1}{|s|^{1/2}} e^{-\frac{\pi^2}{6}\Re\left(\frac1s\right)} e^{-2\pi i mz} \right| dz\ll \frac{1}{|s|^{1/2}} e^{-\frac{\pi^2}{6}\Re\left(\frac1s\right)}.
\end{align*}
By the definition of $s$, we know that $\frac{1}{|s|^{1/2}} \leq \frac{1}{\beta^{1/2}}$. Furthermore, as $|x| \leq 1$, we have $\Re\left(\frac1s\right) \geq \frac{1}{2\beta}$. Thus
$$\left| \mathcal{G}_{m,2} (s)\right| \ll \frac{1}{\beta^{1/2}} e^{-\frac{\pi^2}{12\beta}}.$$
\end{proof}

Combining Lemma~\ref{mainGm1} and Lemma~\ref{mainGm2}, we obtain the following asymptotic estimation of $R_m(\tau)$ near the dominant pole.

\begin{proposition}
\label{close}
Assume that $|x| \leq 1$. Then we have as $n \rightarrow \infty$
$$R_m(\tau) = \frac{s^{\frac32}}{4(2\pi)^\frac 12}\operatorname{sech}^2\left(\frac{\beta m}{2}\right)e^{\frac{k\pi ^2}{6s}}+O\left(\beta^{\frac{5}{2}}m^{\frac23}\operatorname{sech}^2\left(\frac{\beta m}{2}\right)e^{\pi\sqrt{\frac{n}{6}}}\right).$$
\end{proposition}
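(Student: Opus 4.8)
The starting point is the identity \eqref{Rm}, namely $R_m(\tau)=\frac{q^{1/24}}{\eta(\tau)}\bigl(\calG_{m,1}(s)+\calG_{m,2}(s)\bigr)$, so the whole task reduces to combining the asymptotic size of the prefactor $q^{1/24}/\eta(\tau)$ with the two estimates already established in \Cref{mainGm1} and \Cref{mainGm2}. The plan is therefore: first pin down the asymptotics of $q^{1/24}/\eta(\tau)$; then multiply out; finally check that every resulting error is dominated by the single term claimed in the statement.

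For the prefactor I would use the modular inversion $\eta(-1/\tau)=\sqrt{-i\tau}\,\eta(\tau)$ from \Cref{lem:transfo}. Since $\tau=\frac{is}{2\pi}$ we have $q=e^{-s}$, hence $q^{1/24}=e^{-s/24}$, while $-\tfrac1\tau=\frac{2\pi i}{s}$ and $-i\tau=\frac{s}{2\pi}$, so that $\eta(\tau)=\sqrt{\tfrac{2\pi}{s}}\,\eta\!\left(\frac{2\pi i}{s}\right)$. Expanding $\eta\!\left(\frac{2\pi i}{s}\right)=e^{-\pi^2/(6s)}\prod_{k\ge1}\bigl(1-e^{-4\pi^2 k/s}\bigr)$ and using $\Re(1/s)\ge\frac{1}{2\beta}$ (valid for $|x|\le1$) to bound the product by $1+O(e^{-4\pi^2\Re(1/s)})$ gives
\[
\frac{q^{1/24}}{\eta(\tau)}=e^{-s/24}\sqrt{\frac{s}{2\pi}}\,e^{\frac{\pi^2}{6s}}\Bigl(1+O\bigl(e^{-4\pi^2\Re(1/s)}\bigr)\Bigr).
\]
Since $|s|\ll\beta$ we have $e^{-s/24}=1+O(\beta)$, so up to a super-exponentially negligible factor the prefactor equals $\sqrt{s/(2\pi)}\,e^{\pi^2/(6s)}\,(1+O(\beta))$.

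Multiplying this by $\calG_{m,1}(s)=\frac s4\sech^2(\frac{\beta m}{2})+O(\beta^2 m^{2/3}\sech^2(\frac{\beta m}{2}))$ produces the main term $\frac{s^{3/2}}{4(2\pi)^{1/2}}\sech^2(\frac{\beta m}{2})e^{\pi^2/(6s)}$ (so the exponent in the statement should read $e^{\pi^2/(6s)}$, i.e.\ $k=1$), together with two errors: the $O(\beta)$ correction to the prefactor contributes $O(\beta^{5/2}\sech^2(\frac{\beta m}{2})e^{\pi^2/(6s)})$, and the error in $\calG_{m,1}$ contributes $O(\beta^{5/2}m^{2/3}\sech^2(\frac{\beta m}{2})e^{\pi^2/(6s)})$. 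Here I would use $\sqrt{|s|}\ll\beta^{1/2}$ together with the fact that, for $|x|\le1$, one has $\Re(1/s)\le\frac1\beta$, whence $\bigl|e^{\pi^2/(6s)}\bigr|\le e^{\pi^2/(6\beta)}=e^{\pi\sqrt{n/6}}$; both errors are then $\ll\beta^{5/2}m^{2/3}\sech^2(\frac{\beta m}{2})e^{\pi\sqrt{n/6}}$ (the first absorbing into the second since $m^{2/3}\ge1$ in the relevant range).

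The remaining and most delicate contribution is the one coming from $\calG_{m,2}(s)$: by \Cref{mainGm2} it is $\ll\beta^{-1/2}e^{-\pi^2/(12\beta)}$, and after multiplication by the prefactor its modulus is at most $e^{-\pi^2/(12\beta)}e^{\pi\sqrt{n/6}}$. The main obstacle is to verify that this is absorbed into the stated error. Dividing by $\beta^{5/2}m^{2/3}\sech^2(\frac{\beta m}{2})e^{\pi\sqrt{n/6}}$ leaves essentially $e^{-\pi^2/(12\beta)}/\bigl(\beta^{5/2}m^{2/3}\sech^2(\frac{\beta m}{2})\bigr)$, and since $\sech^{-2}(\frac{\beta m}{2})\ll e^{\beta m}$ this is controlled by the super-exponential decay $e^{-\pi^2/(12\beta)}=e^{-\frac\pi2\sqrt{n/6}}$, which beats every polynomial in $\beta$ together with the factor $e^{\beta m}$ as soon as $\beta m$ grows no faster than a power of $\log n$ — precisely what the range $m\le\frac{1}{6\beta}\log n=\frac{\sqrt n\log n}{\pi\sqrt6}$ inherited from \Cref{mainGm1} guarantees. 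Hence the $\calG_{m,2}$ term is swallowed by the claimed error, completing the proof.
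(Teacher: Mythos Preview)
Your proposal is correct and follows essentially the same approach as the paper: start from \eqref{Rm}, use \Cref{lem:transfo} to obtain $\frac{q^{1/24}}{\eta(\tau)}=\sqrt{s/(2\pi)}\,e^{\pi^2/(6s)}(1+O(\beta))$, then plug in \Cref{mainGm1} and \Cref{mainGm2} and bound using $|s|\ll\beta$ and $\Re(1/s)\le 1/\beta$. The paper simply asserts that the main error comes from $\calG_{m,1}$, whereas you spell out why the $\calG_{m,2}$ contribution is absorbed; you also correctly flag that the exponent should read $e^{\pi^2/(6s)}$ (i.e.\ $k=1$ is a typo).
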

\begin{proof}
Recall from~\eqref{Rm} that
$$
R_m(\tau)=\frac{q^\frac {1}{24}}{\eta(\tau)}\left(\mathcal{G}_{m,1}(s)+\mathcal{G}_{m,2}(s)\right).
$$
By Lemma~\ref{lem:transfo} we see that
$$
\frac{q^\frac{1}{24}}{\eta(\tau)}=\left(\frac{s}{2\pi}\right)^\frac 12 e^{\frac{\pi^2}{6s}}\left(1+O(\beta)\right).
$$
We approximate $\mathcal{G}_{m, 1}$ and $\mathcal{G}_{m, 2}$ using Lemma~\ref{mainGm1} and Lemma~\ref{mainGm2}.
The main error term comes from $\mathcal{G}_{m, 1}$. We obtain
$$R_m(\tau)= \frac{s^{\frac32}}{4(2\pi)^{\frac12}} e^{\frac{\pi^2}{6s}} \sech^2\left(\frac{\beta m}{2}\right)+O\left(s^{\frac12}\beta^2m^{\frac23}\sech^2\left(\frac{\beta m}{2}\right) e^{\frac{\pi^2}{6s}}\right).$$
The claim follows now using that
\begin{align*}
|s|&\ll \beta,\\
\Re\left(\frac1{s}\right)&\leq \frac1{\beta}=\frac{\sqrt{6n}}{\pi}.
\end{align*}
\end{proof}

\subsection{Estimates far from the dominant pole}
In the previous section, we have established bounds for the behaviour of $R_m(\tau)$ close to the pole $\tau=0$. For Wright's version of the circle method, we also need estimates far away from this pole. In this section, we consider the range $1 \leq |x| \leq \frac{\pi m^{\frac{1}{3}}}{\beta}$. First we need a lemma, which follows from an argument similar to the one in~\cite{Wright}, see also \cite[Lemma 3.5]{BD13}.
\begin{lemma}\label{lem:far}
Let $P(q)=\tfrac{q^\frac{1}{24}}{\eta(\tau)}$ be the generating function for partitions. Assume that $\tau=u+iv\in \mathbb{H}$. For $Mv \leq |u| \leq \frac{1}{2}$ and $v\rightarrow 0$, we have that
\[ \left|P(q)\right| \ll \sqrt{v} \exp \left[\frac{1}{v}\left(\frac{\pi}{12}-\frac{1}{2\pi}\left(1-\frac{1}{\sqrt{1+M^2}}\right)\right)\right].\]
\end{lemma}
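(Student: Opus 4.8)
The plan is to take logarithms and exploit the product expansion $P(q)=\prod_{n\ge1}(1-q^n)^{-1}$, which for $|q|<1$ gives the absolutely convergent series
\[
\log P(q)=\sum_{n\ge1}\frac1n\frac{q^n}{1-q^n},\qquad \log|P(q)|=\Re\log P(q)=\sum_{n\ge1}\frac1n\Re\frac{q^n}{1-q^n}.
\]
Since $|1-q^n|\ge 1-|q|^n$ for every $n$, bounding each real part by its modulus yields the Wright-type splitting
\[
\log|P(q)|\le\frac{|q|}{|1-q|}+\sum_{n\ge2}\frac1n\frac{|q|^n}{1-|q|^n}=\frac{|q|}{|1-q|}+\log P(|q|)-\frac{|q|}{1-|q|}.
\]
The idea is that the terms with $n\ge2$ are controlled by their values at the real point $|q|=e^{-2\pi v}$, while the single term $n=1$ is retained and estimated sharply in terms of the distance of $q$ to $1$.

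First I would dispose of the two radial quantities. Applying the modular inversion $\eta(-1/\tau)=\sqrt{-i\tau}\,\eta(\tau)$ of Lemma~\ref{lem:transfo} at $\tau=iv$, together with $\eta(i/v)=e^{-\pi/(12v)}(1+o(1))$ as $v\to0$, produces the classical partition asymptotic
\[
\log P(e^{-2\pi v})=\frac{\pi}{12v}+\frac12\log v+O(1),
\]
while a direct expansion gives $\frac{|q|}{1-|q|}=\frac{e^{-2\pi v}}{1-e^{-2\pi v}}=\frac{1}{2\pi v}+O(1)$. Substituting these reduces the whole problem to estimating the single term $\frac{|q|}{|1-q|}$.

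The heart of the argument is the uniform bound $\frac{|q|}{|1-q|}\le\frac{1}{2\pi v\sqrt{1+M^2}}+O(1)$ for $Mv\le|u|\le\tfrac12$. Here I would first note that $|1-q|^2=1-2e^{-2\pi v}\cos(2\pi u)+e^{-4\pi v}$ is strictly increasing in $|u|$ on $(0,\tfrac12)$, and since $|q|$ is independent of $u$, the supremum of $\frac{|q|}{|1-q|}$ over the whole range is attained at the endpoint $|u|=Mv$. At that endpoint both $u=Mv$ and $v$ tend to $0$, so a second-order expansion gives $|1-q|^2=4\pi^2(u^2+v^2)(1+O(v))=4\pi^2v^2(1+M^2)(1+O(v))$, whence $|1-q|=2\pi v\sqrt{1+M^2}(1+O(v))$ and $\frac{|q|}{|1-q|}\le\frac{1}{2\pi v\sqrt{1+M^2}}+O(1)$. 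It is precisely the passage to the modulus $\left|\frac{q}{1-q}\right|$, rather than keeping $\Re\frac{q}{1-q}$, that produces the factor $\tfrac1{\sqrt{1+M^2}}$ instead of the sharper $\tfrac1{1+M^2}$ one would obtain from the real part.

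Combining the three estimates gives
\[
\log|P(q)|\le\frac1v\left(\frac{\pi}{12}-\frac{1}{2\pi}+\frac{1}{2\pi\sqrt{1+M^2}}\right)+\frac12\log v+O(1),
\]
and exponentiating yields the claim, with the prefactor $\sqrt v$ coming from the $\frac12\log v$ term. The main obstacle is the uniform estimate of the $n=1$ term: one must combine the monotonicity in $|u|$ (to collapse the entire minor-arc range to the endpoint $|u|=Mv$) with an expansion of $|1-q|^2$ accurate enough that the error remains $O(1)$ and does not corrupt the constant $\frac{1}{2\pi\sqrt{1+M^2}}$ in the exponent. The crude bound $|1-q|\ge 1-|q|$ alone would only give the weaker constant $\frac{1}{2\pi}$, so the dependence on $u$ genuinely has to be exploited near the endpoint.
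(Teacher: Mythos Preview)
Your proposal is correct and follows essentially the same approach as the paper: the Lambert-series expansion of $\log P(q)$, the splitting off of the $n=1$ term (bounded via $|1-q|$) while controlling the tail by $\log P(|q|)$, the modular inversion of $\eta$ to evaluate $\log P(|q|)$, and the Taylor expansion $|1-q|^2=4\pi^2v^2(1+M^2)(1+O(v))$ at $|u|=Mv$. The only cosmetic difference is that you invoke the monotonicity of $|1-q|$ in $|u|$ on $(0,\tfrac12)$ to pass directly to the endpoint, whereas the paper separates the ranges $Mv\le|u|\le\tfrac14$ and $\tfrac14\le|u|\le\tfrac12$; the two arguments are equivalent.
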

\begin{proof}
Let us write the following Taylor rearrangement
\begin{align*}
\log(P(q))&=-\sum\limits_{n=1}^\infty \log(1-q^n)=\sum\limits_{n=1}^\infty\sum\limits_{m=1}^\infty \frac{q^{nm}}{m}                 =\sum\limits_{m=1}^\infty \frac{q^m}{m(1-q^m)}.
\end{align*}
Therefore we have the estimate
\begin{align*}
|\log(P(q))|&\leq\sum\limits_{m=1}^\infty \frac{|q|^m}{m|1-q^m|}\\
                       &\leq \frac{|q|}{|1-q|}-\frac{|q|}{1-|q|}+\sum\limits_{m=1}^\infty \frac{|q|^m}{m(1-|q|^m)}\\
                       &=\log(P(|q|))-|q|\left(\frac{1}{1-|q|}-\frac{1}{|1-q|}\right).
\end{align*}
For $Mv \leq |u| \leq \frac{1}{4},$ we have $\cos(2\pi u) \leq \cos(2 \pi Mv)$. Therefore
\[|1-q|^2=1-2e^{-2\pi v}\cos(2\pi u)+e^{-4\pi v}\geq 1-2e^{-2\pi v}\cos(2\pi Mv)+e^{-4\pi v}.\]
By a Taylor expansion around $v=0$ we find that
\begin{equation}
\label{eq:closefar}
|1-q|\geq 2\pi v \sqrt{1+M^2}+O(v^2).
\end{equation}
When $\frac{1}{4}\leq |u| \leq \frac{1}{2}$, we have $\cos(2\pi u) \leq 0$. Therefore
$$ |1-q| \geq 1.$$
When $v \rightarrow 0$, this is asymptotically larger than~\eqref{eq:closefar}. Hence, for all $Mv \leq |u| \leq \frac{1}{2}$,
\begin{equation}
\label{eq:|1-q|}
|1-q|\geq 2\pi v \sqrt{1+M^2}+O(v^2).
\end{equation}
Furthermore we have
\begin{equation}
\label{eq:1-q}
1 - |q| = 1-e^{-2\pi v} = 2\pi v +O(v^2).
\end{equation}
By Lemma~\ref{lem:transfo}, we have:
\begin{align*}
P(|q|)=\frac{e^{\frac{-2\pi v}{24}}}{\eta(iv)} = \sqrt{v} e^{\frac{\pi}{12v}}\left(1+O(v)\right).
\end{align*}
Thus
\begin{equation}
\label{eq:logP}
\log(P(|q|)) = \frac{\pi}{12v}+\frac{1}{2}\log(v) + O(v).
\end{equation}
Combining~\eqref{eq:|1-q|},~\eqref{eq:1-q} and~\eqref{eq:logP}, we finally obtain
\begin{align*}
|\log(P(q))|&\leq \log(P(|q|))-\frac{1}{2\pi v}\left(1-\frac{1}{\sqrt{1+M^2}}\right)(1+O(v))\\
                       &=\frac{\pi}{12v}+\frac{1}{2}\log(v) + O(v)-\frac{1}{2\pi v}\left(1-\frac{1}{\sqrt{1+M^2}}\right)+O(1)\\
                       &=\frac{1}{v}\left(\frac{\pi}{12}-\frac{1}{2\pi}\left(1-\frac{1}{\sqrt{1+M^2}}\right)\right)+\frac{1}{2}\log(v)+O(1).
\end{align*}
Exponentiating yields the desired result.
\end{proof}

We are now able to bound $|R_m(\tau)|$ away from $q=1$.

\begin{proposition}\label{far}
Assume that $1\leq|x|\leq \frac{\pi m^\frac 13}{\beta}$. Then we have as $n\rightarrow \infty$
$$
|R_{m}(\tau)| \ll \sqrt{n} \exp \left(\pi \sqrt{\frac{n}{6}} - \frac{\sqrt{6n}}{8\pi} m^{-\frac{2}{3}}\right).
$$
\end{proposition}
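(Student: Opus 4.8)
The plan is to combine the factorisation of \Cref{lem:intR} with the off--pole estimate of \Cref{lem:far}. Writing $\tau=u+iv$ with $\tau=\frac{is}{2\pi}$ and $s=\beta(1+ixm^{-1/3})$, one has $u=-\frac{\beta xm^{-1/3}}{2\pi}$ and $v=\frac{\beta}{2\pi}$, so that $\frac{|u|}{v}=|x|m^{-1/3}$. Since all coefficients $N(m,n)$ are nonnegative, \Cref{lem:intR} and the triangle inequality give
\[
|R_m(\tau)|\le 3\,|P(q)|\int_{-\frac16}^{\frac16}\left|g_m(z;\tau)\right|\,dz ,
\]
and I would bound the two factors separately, extracting the full exponential decay from $|P(q)|$ and settling for a crude polynomial bound on the integral.

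For the first factor I apply \Cref{lem:far} with the choice $M=m^{-1/3}$. Its hypotheses hold on the whole range $1\le|x|\le\frac{\pi m^{1/3}}{\beta}$: indeed $Mv=\frac{m^{-1/3}\beta}{2\pi}\le|u|$ because $|x|\ge1$, while $|u|\le\frac12$ at the upper endpoint, and $v\to0$. Using $\frac1v=\frac{2\pi}{\beta}=2\sqrt{6n}$, the leading part of the exponent is $\frac1v\cdot\frac{\pi}{12}=\frac{\pi^2}{6\beta}=\pi\sqrt{\tfrac n6}$, while for the gain I invoke the elementary inequality $1-\frac{1}{\sqrt{1+t}}\ge\frac t8$ for $0\le t\le1$ (verified by squaring), applied to $t=M^2=m^{-2/3}$:
\[
\frac{1}{2\pi v}\left(1-\frac{1}{\sqrt{1+M^2}}\right)\ge\frac{1}{2\pi v}\cdot\frac{m^{-2/3}}{8}=\frac{\sqrt{6n}}{8\pi}\,m^{-2/3}.
\]
Hence $|P(q)|\ll\beta^{1/2}\exp\!\left(\pi\sqrt{\tfrac n6}-\frac{\sqrt{6n}}{8\pi}m^{-2/3}\right)$, which already contains the entire exponential content of the Proposition; the point of taking $M=m^{-1/3}$ is precisely that it reproduces the prescribed decay rate $m^{-2/3}$.

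It then remains to prove $\int_{-1/6}^{1/6}|g_m(z;\tau)|\,dz\ll\beta^{-1}\asymp\sqrt n$, for then $\beta^{1/2}\cdot\beta^{-1}=\beta^{-1/2}\ll\sqrt n$ and the two estimates combine to the assertion. This is the heart of the matter, and also the main obstacle, since the asymptotic expansions of \Cref{boundgLemma} and of the lemmas feeding it were only justified for $|x|\le1$, whereas here $|x|$ runs up to $\frac{\pi m^{1/3}}{\beta}$, where $\Re\!\left(\tfrac1s\right)$ is small and the dual nome $e^{-4\pi^2/(3s)}$ approaches the unit circle. I would instead bound the two constituents of $g_m$ uniformly in $x$ by means of the inversion formulae in \Cref{lem:transfo}, \Cref{lem:htrafo} and \Cref{lem:mutrafo}: the transformed Appell--Lerch sum is controlled by the geometric--series expansion used in the proof of \Cref{lem:A_1} (valid for every $s$ with $\Re(\tfrac1s)>0$), and the Mordell piece by its exponentially small bound $\ll e^{-\beta/6}$.

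The two features that keep this integral of the right size are structural. First, the simple pole of $\frac{1}{\sinh(2\pi^2z/s)}$ at $z=0$ is cancelled by the prefactor $\zeta^{1/2}-\zeta^{-1/2}=2i\sin(\pi z)$, so that near $z=0$ the integrand is only of size $\ll\Re(\tfrac1s)^{-1}$; second, for $z$ near $\pm\frac16$ the Gaussian $e^{6\pi^2z^2/s}$ is dominated by $\sinh(2\pi^2z/s)$ in the denominator, because $6z^2<2z$ on $(0,\tfrac16]$, so no growth survives there. After this cancellation $g_m$ reduces to the regular combination $\frac{2\pi\sin(\pi z)e^{6\pi^2z^2/s}}{3s\,\sinh(2\pi^2z/s)}$ plus smaller errors, whose integral is $\ll\Re(\tfrac1s)^{-1}\ll\beta^{-1}$, as needed. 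The genuinely hard part will be the uniform control of the theta quotient $\eta^3(3\tau)/\theta(3z;3\tau)$ for large $|x|$, where neither the direct nome $e^{-3s}$ nor the dual nome is small; it is exactly the cancellation already built into $g_m$, turning two individually large pieces into the regular combination above, that renders this term harmless.
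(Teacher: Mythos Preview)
Your handling of $|P(q)|$ via \Cref{lem:far} with $M=m^{-1/3}$ matches the paper exactly, including the inequality $1-(1+t)^{-1/2}\ge t/8$. The divergence is entirely in the second factor.

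The paper does \emph{not} go through \Cref{lem:intR} here. It returns to the original Lambert-type series \eqref{eq:rankgen},
\[
R_m(\tau)=P(q)\int_{-\frac12}^{\frac12}\Bigl(1+(1-\zeta)\sum_{k\ge1}\frac{(-1)^kq^{(3k^2+k)/2}}{1-\zeta q^k}+(1-\zeta^{-1})\sum_{k\ge1}\frac{(-1)^kq^{(3k^2+k)/2}}{1-\zeta^{-1} q^k}\Bigr)e^{-2\pi imz}\,dz,
\]
and bounds the sum trivially by $\sum_{k\ge1}\frac{|q|^{(3k^2+k)/2}}{1-|q|^k}\le\frac{1}{1-|q|}\sum_{k\ge1}e^{-3\beta k^2/2}\ll\beta^{-1}\cdot\beta^{-1/2}\asymp n^{3/4}$. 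The key point is that the \emph{original} nome satisfies $|q|=e^{-\beta}$ for every $x$, so the Gaussian factor $q^{3k^2/2}$ decays uniformly on the whole minor arc. Multiplying by $|P(q)|\ll n^{-1/4}\exp(\cdots)$ yields the stated $\sqrt{n}$.

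Your route through $g_m$ and the inversion formulae is ill-suited to this regime, and the argument you sketch has a genuine gap. The entire machinery of \Cref{sec:Trans} is built for $|x|\le1$, where the dual nome $e^{-4\pi^2/(3s)}$ is tiny; once $|x|$ approaches $\pi m^{1/3}/\beta$ one has $\Re(1/s)\asymp\beta$, the dual nome sits on the unit circle, and every error term in \Cref{lem:asympt}, \Cref{lem:A_1} and \Cref{lem:asympA1} degenerates to $O(1)$. Your appeal to ``the geometric-series expansion used in the proof of \Cref{lem:A_1}'' therefore yields nothing, and the ``cancellation already built into $g_m$'' you invoke in the last paragraph is a feature of the near-pole asymptotics, not a pointwise identity valid for all $x$. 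The claim $\int_{-1/6}^{1/6}|g_m|\,dz\ll\beta^{-1}$ is thus unsupported. One can in fact show $\int|g_m|\,dz\ll\beta^{-3/2}$ (still sufficient) by bounding the Appell--Lerch sums and the theta quotient through their \emph{original} $q$-expansions rather than the transformed ones---but that is precisely the paper's argument, only with the detour through \Cref{lem:intR} added on.
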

\begin{proof}
By~\eqref{eq:rankgen}, we have
\begin{align*}
R_m(\tau) &= P(q) \int_{-\frac{1}{2}}^{\frac{1}{2}} \left( \left(1-\zeta\right) \sum_{k\in \Z} \frac{(-1)^kq^{\frac{3k^2+k}{2}}}{1-\zeta q^k} \right) e^{-2\pi i m z} dz\\
&= P(q) \int_{-\frac{1}{2}}^{\frac{1}{2}} \left( 1+ \left(1-\zeta\right) \sum_{k \geq 1} \frac{(-1)^kq^{\frac{3k^2+k}{2}}}{1-\zeta q^k} + \left(1-\zeta^{-1}\right) \sum_{k \geq 1} \frac{(-1)^kq^{\frac{3k^2+k}{2}}}{1-\zeta^{-1} q^k}\right) e^{-2\pi i m z} dz.
\end{align*}
So we may bound $|R_m(\tau)|$ when $n \rightarrow \infty$ in the following way
\begin{align*}
|R_m(\tau)| &\ll |P(q)| \int_{-\frac{1}{2}}^{\frac{1}{2}} \sum_{k \geq 1} \frac{|q|^{\frac{3k^2+k}{2}}}{1-|q|^k} \left|e^{-2\pi i m z}\right| dz\\
&\ll |P(q)| \frac{1}{1-|q|} \sum_{k \geq 1} e^{-\beta \frac{3k^2}{2}}\\
&\ll |P(q)| \frac{1}{1-|q|} \int_{-\infty}^{\infty} e^{-\beta \frac{3x^2}{2}} dx\\
&\ll |P(q)| \frac{1}{\beta} \sqrt{\frac{2\pi}{3\beta}}\\
&\ll |P(q)| n^{\frac{3}{4}}.
\end{align*}

Now we use Lemma~\ref{lem:far} with $v=\frac{\beta}{2\pi}$, $u=\frac{\beta m^{-\frac{1}{3}}x}{2\pi}$ and $M = m^{-\frac{1}{3}}$. We obtain that for $1\leq|x|\leq \frac{\pi m^\frac 13}{\beta}$,
\begin{align*}
|P(q)| &\ll n^{-\frac{1}{4}} \exp \left[\frac{2\pi}{\beta}\left(\frac{\pi}{12}-\frac{1}{2\pi}\left(1-\frac{1}{\sqrt{1+m^{-\frac{2}{3}}}}\right)\right)\right].
\end{align*}
Therefore
\begin{align*}
|R_m(\tau)| &\ll n^{\frac{1}{2}} \exp \left[\frac{2\pi}{\beta}\left(\frac{\pi}{12}-\frac{1}{2\pi}\left(1-\frac{1}{\sqrt{1+m^{-\frac{2}{3}}}}\right)\right)\right]\\
&\ll  n^{\frac{1}{2}} \exp \left[\pi \sqrt{\frac{n}{6}} - \frac{\sqrt{6n}}{\pi} \left(1-\frac{1}{\sqrt{1+m^{-\frac{2}{3}}}}\right)\right]\\
&\ll  n^{\frac{1}{2}} \exp \left(\pi \sqrt{\frac{n}{6}} - \frac{\sqrt{6n}}{8\pi} m^{-\frac{2}{3}}\right).
\end{align*}

\end{proof}

\section{The Circle Method}\label{sec:Circle}
In this section, as in~\cite{BD13}, we use Wright's variant of the Circle Method to complete the proof of Theorem~\ref{theo:main}.

Using Cauchy's theorem, we write $N(m,n)$ as as integral of its generating function $R_{m}(\tau)$:
$$
N \left( m,n \right) = \frac{1}{2\pi i } \int_C \frac{R_m (\tau)}{q^{n+1}} dq ,
$$
where the contour is the counterclockwise transversal of the circle $C := \{q\in\C\: ;\: |q| = e^{-\beta}\}$. Recall that $s=\beta(1+ixm^{-\frac 13})$. Changing variables we may write
$$
N \left( m,n \right)=\frac{\beta}{2\pi m^\frac 13}\int\limits_{|x|\leq \frac{\pi m^\frac 13}{\beta}} R_m\left(\frac{is}{2\pi}\right)e^{ns} dx.
$$
We split this integral into two pieces
$$
N(m,n) = M + E
$$
with
\begin{align*}
M&:=\frac{\beta}{2\pi m^\frac 13}\int\limits_{|x|\leq 1} R_m\left(\frac{is}{2\pi}\right)e^{ns} dx,\\
E&:=\frac{\beta}{2\pi m^\frac 13}\int\limits_{1\leq |x|\leq \frac{\pi m^\frac 13}{\beta}} R_m\left(\frac{is}{2\pi}\right)e^{ns} dx.
\end{align*}
In the following we show that $M$ contributes to the asymptotic main term whereas $E$ is part of the error term.

As the estimation of $R_m(\tau)$ close to the dominant pole is exactly the same as the one of $\mathcal{C}_{m,1} (q)$ in~\cite{BD13}, the asymptotic behavior of $M$ here is the same as in~\cite{BD13}:

\begin{proposition}\label{integralM}
We have
$$
M =\frac{\beta}{4}\operatorname{sech}^2\left(\frac{\beta m}{2}\right)p(n)\left(1+O\left( \frac{m^\frac 13}{n^\frac 14} \right)\right).
$$
\end{proposition}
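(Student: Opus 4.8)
The plan is to insert the asymptotic expansion of $R_m(\tau)$ near the dominant pole, namely \Cref{close}, into the definition of $M$ and then to split the resulting integral into a main contribution coming from the leading term and an error contribution coming from the $O$-term. Concretely, I would write
\[
M = \frac{\beta}{2\pi m^{1/3}}\int_{|x|\le 1}\left[\frac{s^{3/2}}{4(2\pi)^{1/2}}\sech^2\left(\frac{\beta m}{2}\right)e^{\frac{\pi^2}{6s}} + O\left(\beta^{5/2}m^{2/3}\sech^2\left(\frac{\beta m}{2}\right)e^{\pi\sqrt{n/6}}\right)\right]e^{ns}\,dx,
\]
and then pull out $\sech^2(\beta m/2)$ together with the constants, so that the main contribution is governed by $\int_{|x|\le1}s^{3/2}e^{ns+\pi^2/(6s)}\,dx$. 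This is exactly the integral treated for the crank in \cite{BD13}: because $\beta=\pi/\sqrt{6n}$, the exponent $ns+\pi^2/(6s)$ has its saddle at $s=\beta$ (equivalently $x=0$), where $\partial_s(ns+\pi^2/(6s))=n-\pi^2/(6s^2)=0$ and the critical value is $\pi\sqrt{2n/3}$, matching the exponential growth of $p(n)$.

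For the main term I would argue verbatim as in \cite{BD13}. Writing $P(q)=q^{1/24}/\eta(\tau)=(s/2\pi)^{1/2}e^{\pi^2/(6s)}(1+O(\beta))$ from \Cref{lem:transfo} and using the identity $s^{3/2}=\beta s^{1/2}\,(s/\beta)=\beta s^{1/2}(1+ixm^{-1/3})$, one sees that the leading factor in the integrand is $\beta$ times the integrand governing $p(n)$ under the same parametrization $s=\beta(1+ixm^{-1/3})$ of the contour $C$. Near the saddle one has $s\approx\beta$, so this extra factor is precisely the $\beta$ that turns $\mathcal{G}_{m,1}\approx\frac{s}{4}\sech^2(\beta m/2)$ into the prefactor $\frac{\beta}{4}\sech^2(\beta m/2)$; the remaining integral reproduces the Hardy--Ramanujan main term of $p(n)$. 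Since the Gaussian mass of $e^{ns+\pi^2/(6s)}$ concentrates on $|x|\ll m^{1/3}n^{-1/4}$, which lies well inside $|x|\le1$ whenever $m\le n^{3/4}$ (hence throughout the range $m\le\frac{1}{6\beta}\log n$), the tail beyond the arc is negligible, and Laplace's method yields $M_{\mathrm{main}}=\frac{\beta}{4}\sech^2(\frac{\beta m}{2})p(n)(1+o(1))$; alternatively one evaluates the integral in closed form as an $I_{3/2}$-Bessel function and compares with the Bessel-function asymptotic of $p(n)$.

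For the error contribution I would use $|e^{ns}|=e^{n\Re(s)}=e^{n\beta}=e^{\pi\sqrt{n/6}}$ and integrate the $O$-term over the arc of length $2$, obtaining a bound of size $O\!\left(\beta^{7/2}m^{1/3}\sech^2(\frac{\beta m}{2})e^{\pi\sqrt{2n/3}}\right)$. Since $\beta\asymp n^{-1/2}$ and the main term is of size $\frac{\beta}{4}\sech^2(\frac{\beta m}{2})p(n)\asymp n^{-3/2}\sech^2(\frac{\beta m}{2})e^{\pi\sqrt{2n/3}}$, the ratio of the error contribution to the main term is $O(n^{-7/4}m^{1/3}/n^{-3/2})=O(m^{1/3}/n^{1/4})$, which is exactly the claimed relative error.

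The main obstacle is the saddle-point evaluation of $\int s^{3/2}e^{ns+\pi^2/(6s)}\,dx$ and the bookkeeping confirming that replacing $s^{3/2}$ by $\beta s^{1/2}$, truncating to the arc $|x|\le1$, and applying Laplace's method each introduce relative errors no larger than $O(m^{1/3}/n^{1/4})$. Fortunately, because \Cref{close} supplies the identical leading term $\frac{s^{3/2}}{4(2\pi)^{1/2}}\sech^2(\frac{\beta m}{2})e^{\pi^2/(6s)}$ and error structure as the quantity $\mathcal{C}_{m,1}$ in \cite{BD13}, this computation is word-for-word the one carried out there and can be invoked rather than repeated.
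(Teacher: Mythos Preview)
Your proposal is correct and follows essentially the same approach as the paper: since \Cref{close} yields the identical main term and error structure as the corresponding quantity $\mathcal{C}_{m,1}(q)$ in \cite{BD13}, the evaluation of $M$ reduces verbatim to the computation carried out there, which is precisely what the paper invokes rather than reproves. Your sketch of the underlying saddle-point/Bessel argument and the explicit bookkeeping for the error contribution are more detailed than what the paper records, but the logic and the ultimate reliance on \cite{BD13} coincide.
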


Let us now turn to the integral $E$.

\begin{proposition}
\label{integralE}
As $n\rightarrow \infty$
$$
E\ll n^{\frac{1}{2}} \exp \left(\pi \sqrt{\frac{2n}{3}} - \frac{\sqrt{6n}}{8\pi} m^{-\frac{2}{3}}\right).
$$
\end{proposition}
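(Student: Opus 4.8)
The plan is to estimate $E$ crudely by the triangle inequality, replacing $|R_m(\tau)|$ throughout the range $1 \le |x| \le \tfrac{\pi m^{1/3}}{\beta}$ by the uniform bound of \Cref{far}. The decisive observation is that along the path of integration $s = \beta(1 + ix m^{-1/3})$ has constant real part $\Re(s) = \beta$, so that $|e^{ns}| = e^{n\beta}$; since $\beta = \tfrac{\pi}{\sqrt{6n}}$ we have $n\beta = \pi\sqrt{n/6}$, and hence $|e^{ns}| = e^{\pi\sqrt{n/6}}$ independently of $x$.

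First I would write
$$|E| \le \frac{\beta}{2\pi m^{1/3}} \int_{1 \le |x| \le \frac{\pi m^{1/3}}{\beta}} |R_m(\tau)|\,|e^{ns}|\,dx$$
and insert the two bounds above. Since neither the bound from \Cref{far} nor $|e^{ns}|$ depends on $x$, the remaining integral is simply the length of the integration interval, namely $2\left(\tfrac{\pi m^{1/3}}{\beta} - 1\right) \ll \tfrac{m^{1/3}}{\beta}$. This length cancels against the prefactor $\tfrac{\beta}{2\pi m^{1/3}}$ up to an absolute constant, leaving
$$|E| \ll \sqrt{n}\,\exp\left(\pi\sqrt{\frac{n}{6}} - \frac{\sqrt{6n}}{8\pi}\,m^{-2/3}\right) e^{\pi\sqrt{n/6}}.$$

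Finally I would merge the two copies of $e^{\pi\sqrt{n/6}}$: the total exponent contributed by these factors is $2\pi\sqrt{n/6}$, and the elementary identity $2/\sqrt 6 = \sqrt 2/\sqrt 3$ gives $2\pi\sqrt{n/6} = \pi\sqrt{2n/3}$, which reproduces exactly the claimed main exponential. The negative term $-\tfrac{\sqrt{6n}}{8\pi}m^{-2/3}$ is carried along untouched, and $\sqrt n = n^{1/2}$, so the stated bound follows. There is no genuine obstacle here beyond bookkeeping; the only points needing care are confirming that $\Re(s)=\beta$ (so that the exponential prefactor is truly independent of $x$ and equals $e^{\pi\sqrt{n/6}}$) and verifying the exponent identity that converts $2\pi\sqrt{n/6}$ into $\pi\sqrt{2n/3}$, which is what pins the main term of $E$ at $\exp\!\big(\pi\sqrt{2n/3}\big)$ rather than at the smaller $\exp\!\big(\pi\sqrt{n/6}\big)$ governing $M$.
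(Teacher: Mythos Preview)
Your proof is correct and essentially identical to the paper's: both apply \Cref{far} uniformly on the range $1\le |x|\le \tfrac{\pi m^{1/3}}{\beta}$, use $\Re(s)=\beta$ to get $|e^{ns}|=e^{n\beta}=e^{\pi\sqrt{n/6}}$, cancel the interval length against the prefactor, and combine the two copies of $e^{\pi\sqrt{n/6}}$ into $e^{\pi\sqrt{2n/3}}$. One small correction to your closing commentary: $M$ is not governed by $\exp(\pi\sqrt{n/6})$ but by $p(n)\asymp n^{-1}\exp(\pi\sqrt{2n/3})$, so $E$ is exponentially smaller than $M$ because of the extra negative term $-\tfrac{\sqrt{6n}}{8\pi}m^{-2/3}$ in the exponent, not because of a difference in the leading exponential.
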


\begin{proof}
Using Proposition~\ref{far}, we may bound
\begin{align*}
E&\ll \frac{\beta}{m^{\frac13}}\int_{1\leq x\leq\frac{\pi m^{\frac13}}{\beta}}  n^{\frac{1}{2}} \exp \left(\pi \sqrt{\frac{n}{6}} - \frac{\sqrt{6n}}{8\pi} m^{\frac{-2}{3}}\right) e^{\beta n}  dx\\
&\ll n^{\frac{1}{2}} \exp \left(\pi \sqrt{\frac{2n}{3}} - \frac{\sqrt{6n}}{8\pi} m^{\frac{-2}{3}}\right).
\end{align*}
\end{proof}
Thus $E$ is exponentially smaller than $M$. This completes the proof of Theorem~\ref{theo:main}.


\begin{thebibliography}{10}

\bibitem{AG}
G.~Andrews and F.~Garvan, \emph{Dyson's crank of a partition}, Bull. Amer.
  Math. Soc. \textbf{18} (1988), 167--171.

\bibitem{ASD54}
A.~Atkin and H.~Swinnerton-Dyer, \emph{Some properties of partitions}, Proc.
  Lond. Math. Soc. \textbf{4} (1954), 84--106.

\bibitem{BD13}
K.~Bringmann and J.~Dousse, \emph{On {D}yson's crank conjecture and the uniform
  asymptotic behavior of some inverse theta functions}, accepted in Trans. Amer. Math. Soc. (2013)

\bibitem{KM}
K.~Bringmann and K.~Mahlburg, \emph{Asymptotic inequalities for positive crank
  and rank moments}, Trans. Amer. Math. Soc. \textbf{366} (2014), 1073--1094.

\bibitem{BMR13}
K.~Bringmann, K.~Mahlburg, and R.~C. Rhoades, \emph{Taylor coefficients of mock
  {J}acobi forms and moments of partition statistics}, Math. Proc. Cambridge
  Philos. Soc. (accepted for publication).

\bibitem{Dyson44}
F.~Dyson, \emph{Some guesses in the theory of partitions}, Eureka (Cambridge)
  \textbf{8} (1944), 10--15.

\bibitem{Ga}
F.~Garvan, \emph{New combinatorial interpretations of ramanujan's partition
  congruences mod $5$, $7$, and $11$}, Trans. Amer. Math. Soc. \textbf{305}
  (1988), 47--77.

\bibitem{PR14}
D.~Parry and R.~C. Rhoades, \emph{Dyson's crank distribution conjecture},
  preprint, 2014.

\bibitem{Rama}
S.~Ramanujan, \emph{Congruence properties of partitions}, Math. Zeit.
  \textbf{9} (1921), 147--153.

\bibitem{Wright}
E.~Wright, \emph{Asymptotic partition formulae ii. weighted partitions}, Proc.
  London Math. Soc. \textbf{36} (1933), 117--141.

\bibitem{Zag07}
Don~B. Zagier, \emph{Ramanujan's mock theta functions and their applications
  [d'apr\`{e}s {Z}wegers and {B}ringmann-{O}no]}, Ast\'{e}risque \textbf{326}
  (2009), 143--164.

\bibitem{ZwegersDiss}
S.~Zwegers, \emph{Mock {T}heta {F}unctions}, Ph.D. thesis, Universiteit
  Utrecht, 2002.

\end{thebibliography}
\providecommand{\bysame}{\leavevmode\hbox to3em{\hrulefill}\thinspace}
\providecommand{\MR}{\relax\ifhmode\unskip\space\fi MR }
\providecommand{\MRhref}[2]{%
  \href{http://www.ams.org/mathscinet-getitem?mr=#1}{#2}
}
\providecommand{\href}[2]{#2}

\end{document}